\begin{document}

\newtheorem{lem}{Lemma}[section]
\newtheorem{pro}[lem]{Proposition}
\newtheorem{thm}[lem]{Theorem}
\newtheorem{cor}[lem]{Corollary}

\subjclass{Primary 58E05. Secondary 57R58, 53D42}

\title{Morse homology of manifolds with boundary revisited}

\author{Manabu Akaho}
\address{Department of Mathematics and Information Sciences,
Tokyo Metropolitan University, 
1-1 Minami-Ohsawa, Hachioji, Tokyo 192-0397, Japan}
\email{akaho@tmu.ac.jp}

\thanks{Supported by JSPS Grant-in-Aid for Young Scientists (B)}

\maketitle

\begin{abstract}
This re-certifying paper describes the details of
the Morse homology of manifolds with boundary, introduced in \cite{ak1}, 
in terms of handlebody decompositions.
First we carefully observe Riemannian metrics and Morse functions on manifolds with boundary
so that their gradient vector fields are tangent to the boundary;
secondly we confirm the stable manifolds and the unstable manifolds 
of critical points, and rigorously construct handlebody decompositions;
and finally we re-certify that 
our Morse homology of manifolds with boundary is isomorphic to the absolute singular homology
through connecting homomorphisms.
\end{abstract}

\section{\bf Introduction}

Inspired by H. Hofer \cite{ho} and symplectic field theory \cite{egh},
the author introduced a variant of Floer homology in \cite{ak1}
for Lagrangian submanifolds 
with Legendrian cylindrical end
in a symplectic manifold with concave end,
which can be thought as an infinite dimensional version of the following 
Morse homology of manifolds with boundary.

Let $M$ be an $n$-dimensional oriented compact manifold with boundary $N$.
Denote by $N_1, N_2, \ldots,N_m$ the connected components of $N$.
We fix a collar neighborhood $N\times [0,1)\subset M$, 
and denote by $r$ the standard coordinate on the $[0,1)$-factor.
Then we consider a Riemannain metric $g$
on $M\setminus N$ such that, for $i=1,\ldots,m$,
\[
g|_{N_i\times (0,1)}(x,r)=r^2g_{N_i}(x)+dr\otimes dr,
\]
where $g_{N_i}$ is a Riemannian metric on $N_i$,
and we consider a Morse--Smale function $f$ on $M\setminus N$ 
such that,  for $i=1,\ldots, m$,
\[
f|_{N_i\times(0,1)}(x,r)=r^2f_{N_i}(x)+c_i,
\]
where $f_{N_i}:N_i\to\mathbb{R}$ is a Morse--Smale function on $N_i$
and $c_i\in\mathbb{R}$ is a constant.

Let $Cr_k(f)$ be the set of the critical points $p\in M\setminus N$ of $f$ with Morse index $k$,
and $Cr_k^+(f_{N_i})$ the set of the critical points $\gamma\in N_i$ of $f_{N_i}$ 
with Morse index $k$ and $f_{N_i}(\gamma)>0$,
and similarly let
$Cr_k^-(f_{N_i})$ be the set of the critical points $\delta\in N_i$ of $f_{N_i}$
with Morse index $k$ and $f_{N_i}(\delta)<0$.
We put  $Cr_k^+(f_N):=\bigcup_{i=1}^mCr_k^+(f_{N_i})$ and 
$Cr_k^-(f_N):=\bigcup_{i=1}^mCr_k^-(f_{N_i})$.
Then we define our Morse complex.
Let $CM_k(f)$ be a free $\mathbb{Z}$ module 
\[
CM_k(f):=\bigoplus_{p\in Cr_k(f)}\mathbb{Z}p
\oplus
\bigoplus_{\gamma\in Cr_k^+(f_N)}\mathbb{Z}\gamma,
\]
and $\partial_k:CM_k(f)\to CM_{k-1}(f)$ a linear map, 
for $p\in Cr_k(f)$,
\begin{eqnarray*}
\partial_k p&:=&
\sum_{p'\in Cr_{k-1}(f)}\sharp\mathcal{M}(p,p')p'
+\sum_{\gamma\in Cr_{k-1}^+(f_N)}\sharp\mathcal{M}(p,\gamma)\gamma,
\end{eqnarray*}
and for $\gamma\in Cr_k^+(f_N)$,
\begin{eqnarray*}
\partial_k\gamma &:=&
\sum_{p\in Cr_{k-1}(f)}\sum_{\delta\in Cr_{k-1}^-(f_N)}
\sharp\mathcal{N}(\gamma,\delta)\sharp\mathcal{M}(\delta,p)p
\\
&&
+\sum_{\gamma'\in Cr_{k-1}^+(f_N)}\sum_{\delta\in Cr_{k-1}^-(f_N)}
\sharp\mathcal{N}(\gamma,\delta)\sharp\mathcal{M}(\delta,\gamma')\gamma'
\\
&&
+\sum_{\gamma'\in Cr_{k-1}^+(f_N)}\sharp\mathcal{N}(\gamma,\gamma')\gamma'.
\end{eqnarray*}
We give the precise definition of $\partial_k$ in Section 7.
An important remark is that $\delta\in Cr_k^-(f_N)$ is not a generator of $CM_k(f)$.
Then our main theorem is that:

\begin{thm}
$(CM_*(f),\partial_*)$ is a chain complex, i.e. $\partial_{k-1}\circ\partial_k=0$, 
and the homology is isomorphic to the {\it absolute} singular homology of~$M$.
\end{thm}

As a corollary we obtain the following Morse type inequalities.

\begin{cor}
\[
\sharp Cr_k(f)+\sharp Cr_k^+(f_N)\geq \dim H_k(M;\mathbb{R}).
\]
\end{cor}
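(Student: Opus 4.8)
The plan is to obtain the inequality as a purely homological-algebraic consequence of the Theorem, after passing to real coefficients.

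First I would record that the generating sets are finite, so that $CM_k(f)$ is a free abelian group of finite rank $\sharp Cr_k(f)+\sharp Cr_k^+(f_N)$. Finiteness of $Cr_k^+(f_N)$ is immediate since each $N_i$ is compact and $f_{N_i}$ is Morse. Finiteness of $Cr_k(f)$ uses the prescribed conical form of $f$ near $N$: on $N_i\times(0,1)$ one computes
\[
df(x,r)=2rf_{N_i}(x)\,dr+r^2\,df_{N_i}(x),
\]
which for $0<r<1$ vanishes only when $df_{N_i}(x)=0$ and $f_{N_i}(x)=0$; since the partition of the critical points of $f_{N_i}$ into $Cr_k^{+}(f_{N_i})$ and $Cr_k^{-}(f_{N_i})$ tacitly assumes $f_{N_i}$ does not vanish at any critical point, there are no such points, so the critical set of $f$ is contained in a compact subset of $M\setminus N$ and hence finite.

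Next I would tensor the complex $(CM_*(f),\partial_*)$ with $\mathbb{R}$ over $\mathbb{Z}$. Since $\mathbb{R}$ is a flat $\mathbb{Z}$-module, the functor $-\otimes_{\mathbb{Z}}\mathbb{R}$ is exact, so the homology of $(CM_*(f)\otimes\mathbb{R},\,\partial_*\otimes\mathrm{id})$ is $H_*(CM_*(f),\partial_*)\otimes\mathbb{R}$, which by the Theorem is $H_*(M;\mathbb{Z})\otimes\mathbb{R}\cong H_*(M;\mathbb{R})$. Meanwhile $\dim_{\mathbb{R}}\bigl(CM_k(f)\otimes\mathbb{R}\bigr)=\operatorname{rank}_{\mathbb{Z}}CM_k(f)=\sharp Cr_k(f)+\sharp Cr_k^+(f_N)$.

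Finally I would invoke the elementary fact that for any chain complex of finite-dimensional vector spaces $\dim H_k=\dim\ker\partial_k-\dim\operatorname{im}\partial_{k+1}\le\dim\ker\partial_k\le\dim(\text{degree-}k\text{ term})$. Applied to $(CM_*(f)\otimes\mathbb{R},\,\partial_*\otimes\mathrm{id})$ this gives
\[
\dim H_k(M;\mathbb{R})\le\sharp Cr_k(f)+\sharp Cr_k^+(f_N),
\]
which is the asserted inequality. There is essentially no genuine obstacle here once the Theorem is available; the only point needing a line of care is the finiteness of $Cr_k(f)$, which is exactly where the specific form of $g$ and $f$ on the collar enters.
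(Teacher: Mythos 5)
Your proof is correct and is exactly the standard argument the paper intends (the paper states the Corollary without proof as an immediate consequence of the Theorem): the rank of $CM_k(f)$ bounds $\dim H_k(M;\mathbb{R})$ after tensoring with $\mathbb{R}$. One small remark: the nonvanishing of $f_{N_i}$ at its critical points is not a tacit assumption but is proved in the paper (Lemma 2.4), so your finiteness argument for $Cr_k(f)$ is fully justified.
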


There are several remarks on other related Morse homology.
In \cite{km}, motivated by Seiberg--Witten Floer homology,
Kronheimer--Mrowka also observed Morse homology of manifolds with boundary;
they considered the double of a manifold with boundary and involution invariant Morse functions.
In \cite{la} F. Laudenbach also studied Morse homology of manifolds with boundary;
his gradient vector field are also tangent to the boundary and 
his Morse complex counts trajectories of pseudo-gradient vector fields.

This paper consists of the following sections:
first, in Section 2 we carefully observe 
Riemannian metrics and Morse functions on manifolds with boundary;
in Section 3 we confirm the stable manifolds and the unstable manifolds of critical points,
and fix their orientations;
then, in Section 4 we rigorously construct handlebody decompositions,
and in Section 5 we introduce relative cycles of critical points, 
which is a new technique and very important for the future applications;
moreover, in Section 6 we prepare moduli spaces of gradient trajectories; 
and finally, in Section 7 we recall our Morse complex of manifolds with boundary,
introduced in \cite{ak1}, 
and re-certify that our Morse homology is isomorphic to the absolute singular homology
through connecting homomorphisms.
Although it is important for Floer theory
to consider compactifications of the moduli spaces of gradient trajectories,
we do not mention them in this paper;
the reader may refer to~\cite{ak1}.

\section{\bf Riemannian metrics and Morse functions}

In this section, we carefully observe Riemannian metrics and Morse functions 
on manifolds with boundary
so that their gradient vector fields are tangent to the boundary.

Let $M$ be an $n$-dimensional oriented compact manifold with boundary $N$.
We denote by $N_1, N_2, \ldots, N_m$ the connected components of $N$.
Fix a collar neighborhood $N\times [0,1)\subset M$,
and denote by $r$ the standard coordinate on the $[0,1)$-factor.

Let $g$ be a Riemannian metric on $M\setminus N$, 
and $f$ a smooth function on $M\setminus N$.
Just for simplicity, we consider $g$ whose restriction on the collar neighborhood is
\[
g|_{N_i\times(0,1)}=ag_{N_i}+dr\otimes dr,
\]
where $a:(0,1)\to \mathbb{R}$ is a smooth function and $g_{N_i}$ is a Riemannian metric on $N_i$.
On the other hand, for the gluing analysis of gradient trajectories in Morse homology,
we require the gradient vector field $X_f$ of $f$ with respect to $g$ to be the following form
on the collar neighborhood
under the coordinate change of $r\in(0,1)$ and $t\in(-\infty,0)$ by $r=e^t$:
\[
X_f|_{N_i\times(-\infty,0)}=X_{f_{N_i}}+h_{N_i}\frac{\partial}{\partial t},
\]
where $f_{N_i}$ and $h_{N_i}$ are smooth functions on $N_i$, and 
$X_{f_{N_i}}$ is the gradient vector field of $f_{N_i}$ with respect to $g_{N_i}$ on $N_i$.
Note that
\[
X_{f_{N_i}}+h_{N_i}\frac{\partial}{\partial t}=X_{f_{N_i}}+h_{N_i}r\frac{\partial}{\partial r}.
\]

\begin{lem} 
Suppose that $N$ is connected.
Let $g$ be a Riemannian metric on $N\times(0,1)$ such that
\[
g=ag_N+dr\otimes dr, 
\]
where $a:(0,1)\to\mathbb{R}$ is a smooth function and 
$g_N$ is a Riemannain metric on $N$, 
and let $f:N\times(0,1)\to \mathbb{R}$ be a smooth function whose gradient vector field $X_f$ 
with respect to $g$ is 
 \[
X_f=X_{f_N}+h_Nr\frac{\partial}{\partial r},
\]
where $f_N$ and $h_N$ are non-constant smooth functions on $N$,
and $X_{f_N}$ is the gradient vector field of $f_N$ with respect to $g_N$ on $N$.
Then 
\begin{eqnarray*}
g&=&(Ar^2+B)g_N+dr\otimes dr,
\\
f&=&(Ar^2+B)f_N+Cr^2+D,
\end{eqnarray*}
where $A\neq0, B,C,D\in \mathbb{R}$ are constants.
\end{lem}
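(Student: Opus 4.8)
The plan is to compute the $g$-gradient of $f$ explicitly from the warped-product form of $g$, match it against the prescribed form of $X_f$, and then run a separation-of-variables argument. First I would record that, for $g=a(r)g_N+dr\otimes dr$, the inverse metric is $a(r)^{-1}g_N^{-1}+\frac{\partial}{\partial r}\otimes\frac{\partial}{\partial r}$, so the gradient of $f$ splits as
\[
X_f=\frac{1}{a(r)}\,\nabla_N f+\frac{\partial f}{\partial r}\,\frac{\partial}{\partial r},
\]
where $\nabla_N f$ denotes the $g_N$-gradient of the function $f(\cdot,r)$ on $N$. Comparing with the hypothesis $X_f=X_{f_N}+h_N r\,\frac{\partial}{\partial r}$ yields two identities: the $\frac{\partial}{\partial r}$-components give $\partial f/\partial r=h_N(x)\,r$, and the $N$-components give $\nabla_N f=a(r)\,X_{f_N}=\nabla_N\!\bigl(a(r)f_N\bigr)$.

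From the first identity, integrating in $r$ gives $f(x,r)=\tfrac12 h_N(x)r^2+\phi(x)$ for some smooth function $\phi$ on $N$. From the second identity, $\nabla_N\!\bigl(f-a(r)f_N\bigr)=0$ for each fixed $r$; since $N$ is connected this forces $f(x,r)=a(r)f_N(x)+\psi(r)$ for a smooth function $\psi$ of $r$ alone. Equating the two expressions for $f$ gives
\[
a(r)f_N(x)=\tfrac12 h_N(x)r^2+\phi(x)-\psi(r)\qquad(x\in N,\ r\in(0,1)).
\]
Because $f_N$ is non-constant I may choose $x_1,x_2\in N$ with $f_N(x_1)\neq f_N(x_2)$; subtracting this relation at $x_1$ and $x_2$ and dividing by $f_N(x_1)-f_N(x_2)$ exhibits $a$ in the form $a(r)=Ar^2+B$ for constants $A,B$. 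If $A$ were zero, then $a$ would be constant, and substituting $a\equiv B$ back into the displayed relation and comparing coefficients of $r^2$ would force $h_N$ to be constant, contrary to hypothesis; hence $A\neq0$.

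Finally, reinserting $a(r)=Ar^2+B$ into the relation above gives
\[
\psi(r)=\bigl(\tfrac12 h_N(x)-Af_N(x)\bigr)r^2+\bigl(\phi(x)-Bf_N(x)\bigr),
\]
and since the left-hand side does not depend on $x$, comparing the coefficients of $r^2$ and of $1$ at two points of $N$ shows that $\tfrac12 h_N-Af_N$ and $\phi-Bf_N$ are constants, say $C$ and $D$. Therefore $\psi(r)=Cr^2+D$ and
\[
f(x,r)=\tfrac12 h_N(x)r^2+\phi(x)=(Ar^2+B)f_N(x)+Cr^2+D,
\]
while $g=(Ar^2+B)g_N+dr\otimes dr$, as claimed. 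No step is a genuine obstacle here; the only points that need care are the use of connectedness of $N$ to integrate out the $N$-gradient (so that $f-a(r)f_N$ depends on $r$ only), and the two places where the non-constancy hypotheses enter, namely on $f_N$ to justify the division and on $h_N$ to exclude $A=0$.
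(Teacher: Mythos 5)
Your proposal is correct and follows essentially the same route as the paper: both reduce the hypothesis to the two component equations $\partial f/\partial r=h_N(x)r$ and $d_N f=d_N\{a(r)f_N\}$, use connectedness of $N$ to write $f=a(r)f_N+(\text{function of }r)$, and then separate variables using the non-constancy of $f_N$ (to solve for $a$) and of $h_N$ (to rule out $A=0$). The only cosmetic difference is that you integrate the $r$-equation first and then match, whereas the paper differentiates the $N$-integrated expression and matches; these are equivalent manipulations.
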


\begin{proof}
We write $f=f(x,r)$, $f_N=f_N(x)$, $h_N=h_N(x)$ and $a=a(r)$ for $(x,r)\in N\times (0,1)$,
and we denote by $d_N$ the exterior derivative on $N$. 
From $g$ and $X_f$, 
\[
g(X_f,\cdot)=a(r)d_Nf_N(x)+h_N(x)rdr
\]
On the other hand,
\[
df(x,r)=d_Nf(x,r)+\frac{\partial f(x,r)}{\partial r}dr.
\]
So we have 
\begin{eqnarray}
d_Nf(x,r)&=&d_N\{a(r)f_N(x)\},
\\
\frac{\partial f(x,r)}{\partial r}&=& h_N(x)r.
\end{eqnarray}
Since $N$ is connected, and from (1),
\[
f(x,r)=a(r)f_N(x)+c(r),
\]
where $c=c(r)$ is a smooth function on $(0,1)$,
and then 
\begin{equation}
\frac{\partial f(x,r)}{\partial r}=\frac{da(r)}{dr}f_N(x)+\frac{dc(r)}{dr}.
\end{equation}
From (2) and (3),
\[
h_N(x)=\frac{1}{r}\frac{da(r)}{dr}f_N(x)+\frac{1}{r}\frac{dc(r)}{dr}.
\]
Since we assume that $f_N(x)$ and $h_N(x)$ are non-constant smooth functions, 
\begin{eqnarray*}
\frac{1}{r}\frac{da(r)}{dr}&=&2A,
\\
\frac{1}{r}\frac{dc(r)}{dr}&=&2C,
\end{eqnarray*}
where $A\neq0$ and  $C\in\mathbb{R}$ are constants, and hence
\begin{eqnarray*}
a(r)&=& Ar^2+B,
\\
c(r)&=& Cr^2+D,
\end{eqnarray*}
where $B,D\in\mathbb{R}$ are constants.
Then we obtain 
\begin{eqnarray*}
g(x,r)&=&(Ar^2+B)g_N(x)+dr\otimes dr,
\\
f(x,r)&=&(Ar^2+B)f_N(x)+Cr^2+D.
\end{eqnarray*}
\end{proof}

\begin{cor}
\[
X_f=X_{f_N}+2(Af_N+C)r\frac{\partial}{\partial r}.
\]
\end{cor}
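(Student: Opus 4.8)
The plan is to substitute the explicit expressions for $a(r)$ and $c(r)$ obtained in the Lemma directly into the formula for $h_N$ that already appeared in its proof. Recall that there one finds
\[
h_N(x)=\frac{1}{r}\frac{da(r)}{dr}f_N(x)+\frac{1}{r}\frac{dc(r)}{dr},
\]
while the Lemma gives $a(r)=Ar^2+B$ and $c(r)=Cr^2+D$. Differentiating yields $\frac{da}{dr}=2Ar$ and $\frac{dc}{dr}=2Cr$, hence $\frac{1}{r}\frac{da}{dr}=2A$ and $\frac{1}{r}\frac{dc}{dr}=2C$, so that $h_N=2Af_N+2C=2(Af_N+C)$. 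Plugging this into $X_f=X_{f_N}+h_Nr\frac{\partial}{\partial r}$ gives the asserted identity.

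As a consistency check I would also recompute the gradient from scratch using the conclusion of the Lemma. Since $g=(Ar^2+B)g_N+dr\otimes dr$ is block diagonal with the $N$-block a constant (in $x$) multiple of $g_N$, for any function $F$ on $N\times(0,1)$ one has $X_F=\frac{1}{Ar^2+B}\,\mathrm{grad}_{g_N}\!\big(F|_{N\times\{r\}}\big)+\frac{\partial F}{\partial r}\frac{\partial}{\partial r}$. Applying this to $F=f=(Ar^2+B)f_N+Cr^2+D$: the restriction $f|_{N\times\{r\}}$ equals $(Ar^2+B)f_N(x)$ plus a constant, so its $g_N$-gradient is $(Ar^2+B)X_{f_N}$, and the $N$-component of $X_f$ becomes $\frac{1}{Ar^2+B}(Ar^2+B)X_{f_N}=X_{f_N}$; meanwhile $\frac{\partial f}{\partial r}=2Arf_N+2Cr=2(Af_N+C)r$. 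This reproduces $X_f=X_{f_N}+2(Af_N+C)r\frac{\partial}{\partial r}$.

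There is essentially no obstacle here, as the statement is an immediate corollary of the Lemma's formulas. The only points requiring a moment of care are the bookkeeping of the factor $2$ coming from differentiating $r^2$, and the observation that $Ar^2+B$ is constant along $N$ so that it passes freely through the $g_N$-gradient; both are routine.
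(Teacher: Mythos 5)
Your proposal is correct and matches the paper's (implicit) argument: the paper states this corollary without proof precisely because, in the proof of Lemma 2.1, $h_N=\frac{1}{r}\frac{da}{dr}f_N+\frac{1}{r}\frac{dc}{dr}=2Af_N+2C$ is already established, and substituting into $X_f=X_{f_N}+h_Nr\frac{\partial}{\partial r}$ gives the formula. Your direct recomputation of the gradient from $g=(Ar^2+B)g_N+dr\otimes dr$ is a valid consistency check but adds nothing beyond the paper's route.
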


We call a Riemannian metric $g$ on $M\setminus N$
{\it cone end} if $g$ satisfies
\[
g|_{N_i\times (0,1)}=r^2g_{N_i}+dr\otimes dr,
\]
where $g_{N_i}$ is a Riemannian metric on $N_i$,
and we call a Morse function $f$ on $M\setminus N$ 
{\it cone end} if $f$ satisfies
\[
f|_{N_i\times(0,1)}=r^2f_{N_i}+c_i,
\]
where $f_{N_i}$ is a Morse function on $N_i$ and $c_i\in \mathbb{R}$ is a constant.
On the other hand,
in \cite{ak1} we used Riemannian metrics $g$ and Morse functions $f$ on $M\setminus N$
such that
\begin{eqnarray*}
g|_{N_i\times(0,1)}&=&rg_{N_i}+r^{-1}dr\otimes dr,
\\
f|_{N_i\times(0,1)}&=&rf_{N_i}+c_i,
\end{eqnarray*}
which we called {\it horn end}. 
The following lemma  implies that there is no essential difference 
between cone end and horn end for our purpose;
the case of $\bar{a}=0$ is cone end, and $a=-1$ is horn end.
\begin{lem}
For $a+2\neq0$ and $\bar{a}+2\neq0$, 
let $\bar{r}^{\bar{a}+2}=\left(\frac{\bar{a}+2}{a+2}\right)^2r^{a+2}$,
$\bar{g}_N=\left(\frac{a+2}{\bar{a}+2}\right)^2g_N$ and 
$\bar{f}_N=\left(\frac{a+2}{\bar{a}+2}\right)^2f_N$.
Then 
\begin{eqnarray*}
\bar{r}^{\bar{a}+2}\bar{g}_N
+\bar{r}^{\bar{a}}d\bar{r}\otimes d\bar{r}
&=&
r^{a+2}g_N
+r^adr\otimes dr,
\\
\bar{r}^{\bar{a}+2}\bar{f}_N+c&=&r^{a+2}f_N+c.
\end{eqnarray*}
\end{lem}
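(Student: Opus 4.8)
The plan is to reduce everything to one auxiliary radial variable and then just track constants. Write $\mu := \big(\tfrac{\bar a+2}{a+2}\big)^2$, which is a well-defined positive constant since $a+2\ne0$ and $\bar a+2\ne0$, and set $u := r^{a+2}$, $\bar u := \bar r^{\bar a+2}$. By the definitions of $\bar r$, $\bar g_N$ and $\bar f_N$ the hypotheses become the identities $\bar u=\mu u$, $\bar g_N=\mu^{-1}g_N$, $\bar f_N=\mu^{-1}f_N$, together with $(\bar a+2)^2=\mu\,(a+2)^2$. Note also that $\bar r=\big(\mu\,r^{a+2}\big)^{1/(\bar a+2)}$ is a well-defined smooth positive function of $r>0$: the argument of the root is positive, so there is no sign ambiguity even when $\bar a+2<0$, as in the horn-end case $a=-1$, and the substitution is a legitimate reparametrization onto the corresponding subinterval.

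The one computation that must be carried out is the radial term of the metric. From $du=(a+2)r^{a+1}\,dr$ we get $dr=\frac{1}{(a+2)r^{a+1}}\,du$, hence
\[
r^a\,dr\otimes dr=\frac{r^a}{(a+2)^2 r^{2a+2}}\,du\otimes du=\frac{1}{(a+2)^2\,u}\,du\otimes du,
\]
and in the same way $\bar r^{\bar a}\,d\bar r\otimes d\bar r=\frac{1}{(\bar a+2)^2\,\bar u}\,d\bar u\otimes d\bar u$. Now substitute $\bar u=\mu u$, $d\bar u=\mu\,du$ and $(\bar a+2)^2=\mu\,(a+2)^2$: every power of $\mu$ cancels and the right-hand side becomes $\frac{1}{(a+2)^2\,u}\,du\otimes du=r^a\,dr\otimes dr$. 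This is precisely where the exponent $2$ in the factor $\big(\tfrac{\bar a+2}{a+2}\big)^2$ is forced; any other power would leave a residual constant. The horizontal term is immediate, $\bar r^{\bar a+2}\bar g_N=\bar u\,\mu^{-1}g_N=\mu u\,\mu^{-1}g_N=u\,g_N=r^{a+2}g_N$, and the same one-line manipulation gives the function identity, $\bar r^{\bar a+2}\bar f_N+c=\bar u\,\mu^{-1}f_N+c=u f_N+c=r^{a+2}f_N+c$.

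Adding the horizontal and radial pieces yields the asserted equality of symmetric $2$-tensors, and the second line is already done. There is essentially no obstacle here: the lemma is a bookkeeping identity, and the only point deserving a word of care is the well-definedness of the $(\bar a+2)$-th root used to recover $\bar r$, which was addressed above. I would phrase the conclusion as an equality of tensors and of functions on the overlap of the two coordinate patches, leaving the precise reparametrization of the collar $N\times(0,1)$ implicit, since only the local form of $g$ and $f$ near the boundary is at issue.
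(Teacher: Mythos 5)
Your proof is correct and is exactly the ``direct computation'' that the paper's one-line proof alludes to: substituting $u=r^{a+2}$, $\bar u=\bar r^{\bar a+2}$ and tracking the constant $\mu=\bigl(\tfrac{\bar a+2}{a+2}\bigr)^2$ through the horizontal, radial, and function terms verifies all three identities. Nothing further is needed.
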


\begin{proof}
Direct computations.
\end{proof}

Moreover, instead of cone end or horn end, 
we can also use Riemannian metrics $g$ and Morse functions $f$ on $M\setminus N$, or $M$,
which satisfy
\begin{eqnarray*}
g|_{N_i\times(0,1)}&=&(r^2+1)g_{N_i}+dr\otimes dr,
\\
f|_{N_i\times(0,1)}&=&(r^2+1)f_{N_i}+c_i
\end{eqnarray*}
since their gradient vector field on $N_i\times(0,1)$ is 
completely the same as the one of cone end:
\[
X_f|_{N_i\times(0,1)}=X_{f_{N_i}}+2f_{N_i}r\frac{\partial}{\partial r}.
\]
We call such Riemannian metrics and Morse functions {\it doubling end}.

We remark that, for all the types of pairs of a Riemannian metric and a Morse function above,
we can define their Morse complex in the same way.

In this paper we use cone end.

\begin{lem}
Let $f$ be a cone end Morse function. 
If $\gamma\in N_i$ is a critical point of $f_{N_i}$,
then $f_{N_i}(\gamma)\neq 0$.
\end{lem}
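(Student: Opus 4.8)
The plan is to argue by contradiction from the explicit normal form of the gradient vector field on the collar. Suppose, contrary to the assertion, that $\gamma\in N_i$ is a critical point of $f_{N_i}$ with $f_{N_i}(\gamma)=0$. Since $f$ is cone end, $f|_{N_i\times(0,1)}(x,r)=r^2f_{N_i}(x)+c_i$, which is the situation of the previous Lemma and its Corollary with $A=1$, $B=C=0$, $D=c_i$; hence on the collar
\[
X_f|_{N_i\times(0,1)}=X_{f_{N_i}}+2f_{N_i}\,r\frac{\partial}{\partial r}.
\]

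First I would evaluate this along the ray $\{\gamma\}\times(0,1)\subset M\setminus N$. As $\gamma$ is a critical point of $f_{N_i}$ we have $X_{f_{N_i}}(\gamma)=0$, and as $f_{N_i}(\gamma)=0$ the coefficient of $\partial/\partial r$ vanishes for every $r\in(0,1)$; therefore $X_f$ vanishes identically on $\{\gamma\}\times(0,1)$. Thus $f$ would possess a whole $1$-dimensional submanifold of critical points. Then I would invoke that $f$ is a Morse function: its critical points are nondegenerate, hence isolated, so a positive-dimensional critical set is impossible. This contradiction forces $f_{N_i}(\gamma)\neq0$. If one prefers to avoid quoting isolation, one computes the Hessian of $f(x,r)=r^2f_{N_i}(x)+c_i$ at a point $(\gamma,r_0)$ directly: $\partial_r^2 f=2f_{N_i}(\gamma)=0$ and the mixed second derivatives equal $2r_0\,d_Nf_{N_i}(\gamma)=0$, so $\partial/\partial r$ lies in the kernel of the Hessian, again contradicting nondegeneracy.

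The argument is short, and the only point requiring care — the main, albeit minor, obstacle — is the bookkeeping: one must note that the ray $\{\gamma\}\times(0,1)$ genuinely lies inside $M\setminus N$, where the cone end normal form and the Corollary apply, and that ``Morse function'' is being used here in the sense that all critical points are nondegenerate, so that the appearance of a critical arc really is a contradiction. No Smale transversality or compactness input is needed.
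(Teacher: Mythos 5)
Your argument is correct and is essentially the paper's own proof: the paper likewise notes that $df|_{N_i\times(0,1)}=r^2df_{N_i}+2rf_{N_i}\,dr$ would vanish along the whole ray $\{\gamma\}\times(0,1)$ if $f_{N_i}(\gamma)=0$, contradicting the isolation of critical points of a Morse function. Your use of $X_f$ instead of $df$ and the optional Hessian computation are only cosmetic variations.
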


\begin{proof}
Since $df|_{N_i\times (0,1)}=r^2df_{N_i}+2rf_{N_i}dr$ and 
the critical points of a Morse function are isolated,
$f_{N_i}(\gamma)\neq 0$. 
\end{proof}

Hence we can divide the critical points $x$ of $f_{N_i}$ into two groups;
one is $f_{N_i}(x)>0$, and the other is $f_{N_i}(x)<0$.

Moreover we have the following lemma:

\begin{lem}
Let $g$ and $f$ be cone end.
Then there is no map $u:\mathbb{R}\to M\setminus N$
which satisfies $du/dt=-X_f\circ u$ with 
$\lim_{t\to-\infty}u(t)\in N_i\times\{0\}$ and $\lim_{t\to\infty}u(t)\in N_i\times\{0\}$.
\end{lem}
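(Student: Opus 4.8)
The plan is to use only the monotonicity of $f$ along negative gradient trajectories together with the explicit cone end normal form of $f$ near $N$; no information about the behaviour of $u$ in the interior of $M\setminus N$ will be needed.

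First I would record that $f\circ u$ is non-increasing: since $X_f$ is the $g$-gradient of $f$, i.e. $g(X_f,\cdot)=df$, any solution of $du/dt=-X_f\circ u$ satisfies
\[
\frac{d}{dt}f(u(t))=df_{u(t)}\bigl(-X_f(u(t))\bigr)=-\bigl|X_f(u(t))\bigr|_g^2\le 0 .
\]
Next I would compute the limits of $f\circ u$ at $t=\pm\infty$. On $N_i\times(0,1)$ we have $f(x,r)=r^2f_{N_i}(x)+c_i$, and since $N_i$ is compact $f_{N_i}$ is bounded, so $f$ extends continuously to $N_i\times[0,1)$ with $f\equiv c_i$ on $N_i\times\{0\}$. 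Hence $\lim_{t\to-\infty}f(u(t))=c_i=\lim_{t\to+\infty}f(u(t))$. A non-increasing function on $\mathbb{R}$ whose limits at $-\infty$ and $+\infty$ agree must be constant, so $f(u(t))\equiv c_i$; therefore $\frac{d}{dt}f(u(t))\equiv 0$, which forces $X_f(u(t))\equiv 0$ and hence $\dot u\equiv 0$. Thus $u$ is the constant map at some critical point $p$ of $f$. But then $\lim_{t\to\pm\infty}u(t)=p\in M\setminus N$, whereas $N_i\times\{0\}\subset N$ is disjoint from $M\setminus N$, contradicting the hypothesis that both limits lie in $N_i\times\{0\}$.

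The only step requiring a little care is passing to the limit of $f\circ u$ as $t\to\pm\infty$, i.e. the continuity of $f$ up to $N_i\times\{0\}$ and the identification of the common limiting value as $c_i$; this is exactly what the cone end form of $f$ and the compactness of $N_i$ provide, and it is here that the assumption that both ends of $u$ converge into the \emph{same} component $N_i$ is used. Everything else in the argument is formal, so I do not expect a serious obstacle.
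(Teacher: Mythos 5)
Your proof is correct and follows essentially the same route as the paper: both rest on the monotonicity of $f$ along negative gradient trajectories and on the observation that the cone end form forces $\lim_{t\to\pm\infty}f(u(t))=c_i$. The paper simply asserts the strict inequality for a non-constant trajectory directly, while you derive the same contradiction by first showing $u$ would have to be constant at an interior critical point; the content is the same.
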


\begin{proof}
Suppose that a map $u:\mathbb{R}\to M\setminus N$ satisfies
$du/dt=-X_f\circ u$ with $\lim_{t\to-\infty}u(t)\in N_i\times\{0\}$ and 
$\lim_{t\to\infty}u(t)\in N_i\times\{0\}$. 
Since $du/dt=-X_f\circ u$, $\lim_{t\to-\infty}f(u(t))>\lim_{t\to\infty}f(u(t))$,
which contradicts $\lim_{t\to-\infty}f(u(t))=\lim_{t\to\infty}f(u(t))=c_i$.
\end{proof}

On the other hand,
there may exist 
\begin{itemize}
\item a non-constant map $u:\mathbb{R}\to N_i$ which satisfies $du/dt=-X_{f_{N_i}}\circ u$,
and 
\item a map $u:\mathbb{R}:\to M\setminus N$
which satisfies
$du/dt=-X_f\circ u$ with
$\lim_{t\to-\infty}u(t)\in N_i\times\{0\}$ and $\lim_{t\to\infty}u(t)\in N_j\times\{0\}$
if $c_i>c_j$;
this was pointed out by T. Nishino and Y. Nohara.
\end{itemize}

We remark that, although their proofs need slight modifications, Lemma 2.4 and Lemma 2.5 
also hold for horn end and doubling end. 

\section{\bf Stable manifolds and unstable manifolds}

First we prepare notation.
Let $M$ be an $n$-dimensional oriented compact manifold with boundary $N$ as before,
and $g$ and $f$ a cone end Riemannian metric and a cone end Morse function 
on $M\setminus N$, respectively.
We fix an orientation of $N$ so that, for  
an oriented basis $\{v_1,\ldots,v_{n-1}\}$ of $T_pN$
and an outward-pointing vector $v_{out}\in T_pM$, 
the orientations of $\{v_{out}, v_1,\ldots,v_{n-1}\}$ and $T_pM$ coincide.

We define $Cr_k(f)$ to be the set of 
the critical points $p\in M\setminus N$ of $f$ with Morse index $k$,
and $Cr_k^+(f_{N_i})$ the set of the critical points $\gamma\in N_i$ of $f_{N_i}$ 
with Morse index $k$ and $f_{N_i}(\gamma)>0$,
and similarly we define
$Cr_k^-(f_{N_i})$ to be the set of the critical points $\delta\in N_i$ of $f_{N_i}$
with Morse index $k$ and $f_{N_i}(\delta)<0$.
We put  $Cr_k^+(f_N):=\bigcup_{i=1}^mCr_k^+(f_{N_i})$ and 
$Cr_k^-(f_N):=\bigcup_{i=1}^mCr_k^-(f_{N_i})$.

Let $X_f$ be the gradient vector field on $M\setminus N$ of a cone end Morse function
$f$ with respect to a cone end Riemannian metric $g$. 
Then the restriction of $X_f$ on the collar neighborhood is 
\[
X_f|_{N_i\times(0,1)}=X_{f_{N_i}}+2f_{N_i}r\frac{\partial}{\partial r}.
\]
Hence we define a vector field $\overline{X}_f$ on $M$ by
\[
\overline{X}_f:=
\left\{
\begin{array}{ll}
X_f, & \mbox{on } M\setminus N, 
\\
X_{f_{N_i}}, & \mbox{on } N_i\times\{0\},
\end{array}
\right.
\]
and denote by 
$\overline{\varphi} _t:M\to M$ the isotopy of $-\overline{X}_f$, i.e. $\overline{\varphi}_t$ is given by
$d\overline{\varphi}_t/dt=-\overline{X}_f\circ \overline{\varphi}_t$
and
$\overline{\varphi}_0(x)=x$.

Let $B^k:=\{(x_1,\ldots,x_k):x_1^2+\cdots+x_k^2<1\} $ be the $k$-dimensional open ball,
and $\partial B^k:=\overline{B^k}\setminus B^k$.
Moreover, we define the $k$-dimensional open half-ball
$H^k:=\{(x_1,\ldots,x_k):x_1^2+\cdots+x_k^2<1, x_k\geq 0\}$
and  $\partial H^k:=\{(x_1,\ldots,x_k)\in H^k:x_k=0\}$.

Now we define stable manifolds and unstable manifolds for critical points.
For $p\in Cr_k(f)$, we define the stable manifold $S_p$ of $p$ by
\[
S_p:=\left\{ x\in M:\lim_{t\to+\infty}\overline{\varphi}_t(x)=p\right\},
\] 
and the unstable manifold $U_p$ of $p$ by 
\[
U_p:=\left\{x\in M:\lim_{t\to-\infty}\overline{\varphi}_t(x)=p\right\}.
\]
Since $\overline{X}_f$ is tangent to $N$, 
$S_p$ and $U_p$ are contained in $M\setminus N$. 
Note that $S_p$ is diffeomorphic to $B^{n-k}$,
and $U_p$ is diffeomorphic to $B^k$.
Moreover, $S_p$ and $U_p$ intersect transversely at $p$.
We fix orientations of $S_p$ and $U_p$ so that 
the orientations of $T_pS_p\oplus T_pU_p$ and $T_pM$ coincide.

Next, for $\gamma\in Cr_k^+(f_{N_i})$, 
we define the stable manifold $S_{\gamma}$ of $\gamma$ by
\[
S_{\gamma}:=\left\{x\in M:\lim_{t\to+\infty}\overline{\varphi}_t(x)=(\gamma,0)
\in N_i\times \{0\}\right\},
\]
and the unstable manifold $U_{\gamma}$ of $\gamma$ by
\[
U_{\gamma}:=\left\{x\in M:\lim_{t\to-\infty}\overline{\varphi}_t(x)=(\gamma,0)
\in N_i\times\{0\}\right\}.
\]
Since $f_{N_i}(\gamma)>0$,
$S_{\gamma}$ is contained in $M$, 
and $U_{\gamma}$ is contained in $N_i\times\{0\}\subset M$.
Note that $U_{\gamma}$ is diffeomorphic to $B^k$,
$S_{\gamma}$ is diffeomorphic to $H^{n-k}$,
and $S_{\gamma}\cap (N_i\times\{0\})$ is diffeomorphic to 
$\partial H^{n-k}\cong B^{n-1-k}$.
Moreover $S_{\gamma}$ and 
$U_{\gamma}$ intersect transversely 
at $(\gamma,0)\in N_i\times\{0\}$.

We fix orientations of $S_{\gamma}$ and $U_{\gamma}$ so that 
the orientations of $T_{\gamma}S_{\gamma}\oplus T_{\gamma}U_{\gamma}$ 
and $T_{\gamma}M$ coincide.
Moreover, we fix an orientation of $S_{\gamma}\cap(N_i\times\{0\})$ so that, 
for an oriented basis $\{v_1,\ldots,v_{n-1-k}\}$ of $T_{\gamma}(S_{\gamma}\cap (N_i\times\{0\}))$
and an outward-pointing vector $v_{out}\in T_{\gamma}M$,
the orientations of $\{v_{out}, v_1,\ldots,v_{n-1-k}\}$ and $T_{\gamma}S_{\gamma}$ coincide.
Then the orientations of $T_{\gamma}(S_{\gamma}\cap (N_i\times\{0\}))
\oplus T_{\gamma}U_{\gamma}$ and $T_{\gamma}N_i$ coincide.

Similarly, for $\delta\in Cr_k^-(f_{N_i})$, 
we define the stable manifold $S_{\delta}$ of $\delta$ by
\[
S_{\delta}:=\left\{x\in M:\lim_{t\to+\infty}\overline{\varphi}_t(x)=(\delta,0)
\in N_i\times\{0\}\right\},
\]
and the unstable manifold $U_{\delta}$ of $\delta$ by
\[
U_{\delta}:=\left\{x\in M:\lim_{t\to-\infty}\overline{\varphi}_t(x)=(\delta,0)
\in N_i\times\{0\}\right\}.
\]
Since $f_{N_i}(\delta)<0$,
$S_{\delta}$ is contained in $N_i\times\{0\}\subset M$, and $U_{\delta}$ is contained in $M$.
Note that $S_{\delta}$ is diffeomorphic to $B^{n-1-k}$,
$U_{\delta}$ is diffeomorphic to $H^{k+1}$,
and $U_{\delta}\cap (N_i\times\{0\})$ is diffeomorphic to $\partial H^{k+1}\cong B^k$.
Moreover, $S_{\delta}$ and $U_{\delta}$ intersect transversely at $(\delta,0)\in N_i\times\{0\}$.

We fix orientations of $S_{\delta}$ and $U_{\delta}$ so that 
the orientations of $T_{\delta}S_{\delta}\oplus T_{\delta}U_{\delta}$ and $T_{\delta}M$ coincide.
Moreover, we fix an orientation of $U_{\delta}\cap(N_i\times\{0\})$ so that, 
for an oriented basis $\{v_{n-k},\ldots,v_n\}$ of $T_{\delta}(U_{\delta}\cap (N_i\times\{0\}))$
and an outward-pointing vector $v_{out}\in T_{\delta}M$,
the orientations of $\{v_{out}, v_{n-k},\ldots,v_n\}$ and $T_{\delta}U_{\delta}$ coincide.
Then the difference of the orientations of $T_{\delta}S_{\delta}
\oplus T_{\delta}(U_{\delta}\cap(N_i\times\{0\}))$ and $T_{\delta}N_i$ is $(-1)^{n-k-1}$.

\section{\bf Handlebody decompositions}

Let $M$ be an $n$-dimensional oriented compact manifold with boundary $N$ as before, and 
$g$ and $f$ a cone end Riemannian metric and 
a cone end Morse function on $M\setminus N$, respectively.
Moreover we assume that $f$ satisfies the Morse--Smale conditions in the following sense: 
\begin{itemize}
\item for $p,p'\in \bigcup_{k=0}^nCr_k(f)$, 
$U_p$ and $S_{p'}$ intersect transversely in $M\setminus N$,
\item for $\theta, \theta'\in \bigcup_{k=0}^{n-1}Cr_k^+(f_{N_i})\cup 
\bigcup_{k=0}^{n-1}Cr_k^-(f_{N_i})$,
$U_{\theta}$ and $S_{\theta'}$ intersect transversely in $N_i$,
\item for $p\in\bigcup_{k=0}^nCr_k(f)$ and $\gamma\in \bigcup_{k=0}^{n-1}Cr_k^+(f_N)$,
$U_p$ and $S_{\gamma}$ intersect transversely in $M\setminus N$,
\item for $\delta\in \bigcup_{k=0}^{n-1}Cr_k^-(f_N)$ and $p\in\bigcup_{k=0}^nCr_k(f)$,
$U_{\delta}$ and $S_p$ intersect transversely in $M\setminus N$, and
\item for $\delta\in \bigcup_{k=0}^{n-1}Cr_k^-(f_{N_i})$ and 
$\gamma\in \bigcup_{k=0}^{n-1}Cr_k^+(f_{N_j})$ with $c_i>c_j$,
$U_{\delta}$ and $S_{\gamma}$ intersect transversely in $M\setminus N$.
\end{itemize}
In fact we can prove that generic cone end Morse functions satisfy the above Morse--Smale conditions
by the standard generosity arguments.

Recall that 
$\overline{X}_f$ is the vector field on $M$ defined by
\[
\overline{X}_f:=
\left\{
\begin{array}{ll}
X_f, & \mbox{on } M\setminus N, 
\\
X_{f_{N_i}}, & \mbox{on } N_i\times\{0\}.
\end{array}
\right.
\]
We call a map $u:\mathbb{R}\to M$ a gradient trajectory from $x$ to $y$
if $du/dt=-\overline{X}_f\circ u$ with $\lim_{t\to-\infty}u(t)=x$ and $\lim_{t\to\infty}u(t)=y$.
Then we can prove the following lemma:

\begin{lem}
Let $f$ be a cone end Morse--Smale function on $M\setminus N$.
For $p\in Cr_k(f)$ and $p'\in Cr_l(f)$, there is no non-constant
gradient trajectory from $p$ to $p'$ if $k\leq l$.
\end{lem}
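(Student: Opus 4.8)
The plan is to run the standard Morse--Smale dimension count, together with one extra observation to rule out the diagonal case $p=p'$. First I would record that any gradient trajectory $u:\mathbb{R}\to M$ from $p$ to $p'$ lies entirely in $M\setminus N$: since $p=\lim_{t\to-\infty}u(t)$ and $p\in M\setminus N$, the point $u(t)$ lies in $M\setminus N$ for all sufficiently negative $t$, and because $\overline{X}_f$ is tangent to $N$, a flow line of $-\overline{X}_f$ through a point of $M\setminus N$ never meets $N$. Hence $\overline{X}_f=X_f$ along $u$, and $u$ is an honest gradient trajectory of $f$ in the open manifold $M\setminus N$.

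Next I would observe that, writing $u(t)=\overline{\varphi}_t(u(0))$, the whole image satisfies $u(\mathbb{R})\subset U_p\cap S_{p'}$, since $\lim_{t\to-\infty}\overline{\varphi}_t(u(s))=p$ and $\lim_{t\to+\infty}\overline{\varphi}_t(u(s))=p'$ for every $s\in\mathbb{R}$. Moreover, if $u$ is non-constant it is an immersion: were $u'(t_0)=-\overline{X}_f(u(t_0))$ to vanish, then $u(t_0)$ would be a zero of $\overline{X}_f$ in $M\setminus N$, i.e. a critical point of $f$, and uniqueness of solutions of $du/dt=-\overline{X}_f\circ u$ would force $u$ to be constant. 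In particular $u(\mathbb{R})$ is a connected subset of $M\setminus N$ containing more than one point.

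For the case $p\neq p'$ I would invoke the Morse--Smale hypothesis that $U_p$ and $S_{p'}$ intersect transversely in $M\setminus N$. Then $U_p\cap S_{p'}$ is a submanifold of dimension $\dim U_p+\dim S_{p'}-n=k+(n-l)-n=k-l$. If $k<l$ this is negative, so $U_p\cap S_{p'}=\emptyset$, contradicting $u(\mathbb{R})\neq\emptyset$; if $k=l$ it is $0$-dimensional, hence discrete, and cannot contain the connected set $u(\mathbb{R})$, which has at least two points. Either way no non-constant trajectory from $p$ to $p'$ exists.

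Finally, the case $p=p'$ (which forces $k=l$) I would dispose of directly, exactly as in the proof of Lemma 2.5: along a non-constant $u$ one has $\tfrac{d}{dt}f(u(t))=-g(X_f,X_f)(u(t))<0$, so $f\circ u$ is strictly decreasing and $\lim_{t\to-\infty}f(u(t))>\lim_{t\to+\infty}f(u(t))$, which contradicts $f(p)=f(p')$. The only genuinely delicate point is the first one --- ensuring that a trajectory asymptotic to an interior critical point really stays off the boundary --- and this is immediate from tangency of $\overline{X}_f$ to $N$; the rest is the usual transversality bookkeeping.
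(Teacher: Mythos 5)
Your proof is correct and follows essentially the same route as the paper: the image of a non-constant trajectory lies in the transverse intersection $U_p\cap S_{p'}$, whose dimension $k-l$ is too small to contain a $1$-dimensional flow line when $k\leq l$. The extra care you take (confining the trajectory to $M\setminus N$, the connectedness argument in the $k=l$ case, and disposing of $p=p'$ via strict monotonicity of $f\circ u$) only fills in details the paper leaves implicit.
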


\begin{proof}
Let $u:\mathbb{R}\to M$ be a non-constant gradient trajectory from $p$ to $p'$. 
Then the image of $u$ is contained in $U_p\cap S_{p'}$.
Since $U_p$ and $S_{p'}$ intersect transversely in $M\setminus N$ 
so that $\dim U_p\cap S_{p'}=k-l$,
and since the dimension of the image of $u$ is 1, 
there is no such gradient trajectory if $k-l\leq 0$.
\end{proof}

Similarly we can prove the following lemma.
We omit the proof:

\begin{lem}
Let $f$ be a cone end Morse--Smale function on $M$.
\\
(1) For $\theta\in Cr_k^+(f_{N_i})\cup Cr_k^-(f_{N_i})$ 
and $\theta'\in Cr_l^+(f_{N_i})\cup Cr_l^-(f_{N_i})$,
there is no non-constant gradient trajectory from $\theta$ to $\theta'$ 
if $k\leq l$.
\\
(2) For $p\in Cr_k(f)$ and $\gamma\in Cr^+_l(f_N)$, 
there is no non-constant gradient trajectory from $p$ to $\gamma$
if $k\leq l$.
\\
(3) For $\delta\in Cr_k^-(f_N)$ and $p\in Cr_l(f)$, 
there is no non-constant gradient trajectory from $\delta$ to $p$
if $k+1\leq l$.
\\
(4) For $\delta\in Cr_k^-(f_{N_i})$ and 
$\gamma\in Cr_l^+(f_{N_j})$ with $i \neq j$,
there is no non-constant gradient trajectory 
from $\delta$ to $\gamma$ if $k+1\leq l$.
\end{lem}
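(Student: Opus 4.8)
The plan is to reproduce the dichotomy used in the proof of Lemma 4.1, after first localizing the trajectory. Since $\overline{X}_f$ is tangent to each slice $N_{i'}\times\{0\}$, every such slice is invariant under the flow $\overline{\varphi}_t$, so by uniqueness of integral curves a gradient trajectory $u:\mathbb{R}\to M$ is of exactly one of two types: either $u(\mathbb{R})\subset M\setminus N$, or $u(\mathbb{R})\subset N_{i'}\times\{0\}$ for a single $i'$. In the first type $f\circ u$ is the restriction of the genuine Morse function, hence strictly decreasing along non-constant $u$; I will also use that $f$ extends continuously to $M$ with $f|_{N_{i'}\times\{0\}}=c_{i'}$. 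In the second type $u$ is a $g_{N_{i'}}$-gradient trajectory of the closed-manifold Morse--Smale function $f_{N_{i'}}$, so the argument of Lemma 4.1 applies to $f_{N_{i'}}$ on $N_{i'}$.

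For part (1) both limit points $(\theta,0)$ and $(\theta',0)$ lie in $N_i\times\{0\}$. If $u$ is of the first type, then $\lim_{t\to-\infty}f(u(t))=c_i=\lim_{t\to\infty}f(u(t))$ contradicts strict monotonicity; this is precisely Lemma 2.5. If $u$ is of the second type then necessarily $i'=i$ (the limit points lie in $N_i$), and $u$ is a non-constant gradient trajectory of $f_{N_i}$ running from a critical point of Morse index $k$ to one of Morse index $l$; by the Morse--Smale condition on $N_i$ its image lies in a transverse intersection of an unstable and a stable manifold of dimension $k-l\leq0$, which cannot contain the one-dimensional image of a non-constant trajectory.

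For parts (2), (3) and (4) the trajectory cannot be of the second type, since in (2) and (3) one of its limit points is the interior critical point $p\notin N$, and in (4) its two limit points lie in the distinct slices $N_i\times\{0\}$ and $N_j\times\{0\}$. Hence $u(\mathbb{R})\subset M\setminus N$, so its image lies in the relevant transverse intersection inside $M\setminus N$, and one concludes by a dimension count using the stated Morse--Smale transversality and the dimensions recorded in Section 3: $\dim(U_p\cap S_\gamma)=k+(n-l)-n=k-l$ for (2); $\dim(U_\delta\cap S_p)=(k+1)+(n-l)-n=k+1-l$ for (3); and $\dim(U_\delta\cap S_\gamma)=(k+1)+(n-l)-n=k+1-l$ for (4), where for the last case I first note that $c_i\leq c_j$ is impossible, since then $f(\delta,0)=c_i\leq c_j=f(\gamma,0)$ would violate monotonicity, so one may assume $c_i>c_j$ and invoke the corresponding Morse--Smale condition. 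Under the stated index hypotheses each of these dimensions is $\leq0$, contradicting the one-dimensionality of the image.

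The only point requiring genuine care is the localization step — verifying that $\overline{X}_f$ is tangent to every $N_{i'}\times\{0\}$, so that a trajectory touching $N$ is trapped in a single component — together with the bookkeeping for the half-ball (co)dimensions $H^{n-l}$ and $H^{k+1}$ and the observation that all of the transversality takes place in the open manifold $M\setminus N$ (respectively in the closed manifold $N_i$ in the exceptional sub-case of (1)). Beyond this the argument is a routine parallel of Lemmas 2.5 and 4.1, which is why the proof can be omitted.
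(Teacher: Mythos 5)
Your proof is correct and follows exactly the route the paper intends: the paper omits the proof with the remark that it is "similar" to Lemma 4.1, and your argument is precisely that dimension count via the stated Morse--Smale transversality conditions, supplemented by the necessary localization (flow-invariance of each $N_{i'}\times\{0\}$) and the monotonicity argument of Lemma 2.5 for the boundary-to-boundary cases. The half-ball dimensions $\dim U_\delta=k+1$ and $\dim S_\gamma=n-l$ and the reduction of case (4) to the hypothesis $c_i>c_j$ are all handled correctly.
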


Now we construct a handlebody decomposition of $M$:

\begin{thm}
Let $f$ be a cone end Morse--Smale function on $M\setminus N$.
Then there exists a sequence of open subsets $M^{-1}=\tilde{M}^0=\emptyset
\subset M^0\subset \tilde{M}^1\subset M^1\subset
\cdots \subset\tilde{M}^n\subset  M^n=M$ such that
\begin{itemize}
\item $\partial \tilde{M}^k:=\overline{\tilde{M}^k}\setminus \tilde{M}^k$ 
and $\partial M^k:=\overline{M^k}\setminus M^k$ 
are smooth and transversal to 
$\overline{X}_f$,
where $\overline{\tilde{M}^k}$ and $\overline{M^k}$ are 
the closures of $\tilde{M}^k$ and $M^k$ in $M$, respectively,
\item for $\delta\in Cr_{k-1}^-(f_N)$, $\partial M^{k-1}$ and $U_{\delta}$ intersect transversely,
and $U_{\delta}\setminus M^{k-1}$ is diffeomorphic to the $k$-dimensional closed half-ball,
\item $\overline{M^{k-1}}\cup \bigcup_{\delta\in Cr_{k-1}^-(f_N)}U_{\delta}$
is a deformation retract of $\overline{\tilde{M}^{k}}$;
\item for $p\in Cr_k(f)$,
$\partial \tilde{M}^k$ and $U_p$ intersect transversely, and 
$U_p\setminus \tilde{M}^k$ is diffeomorphic to the $k$-dimensional closed ball,
\item for $\gamma\in Cr_k^+(f_N)$,
$\partial \tilde{M}^k$ and $U_{\gamma}$ intersect transversely, and 
$U_{\gamma}\setminus \tilde{M}^k$ is diffeomorphic to the $k$-dimensional closed ball, and
\item $\overline{\tilde{M}^k}\cup
\bigcup_{p\in Cr_k(f)}U_p\cup\bigcup_{\gamma\in Cr^+_k(f_N)}U_{\gamma}$ 
is a deformation retract of $\overline{M^k}$.
\end{itemize}
\end{thm}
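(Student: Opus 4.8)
The plan is to construct the filtration inductively on $k$, building each stage by flowing the sublevel-set-type regions along $\overline{\varphi}_t$ and enlarging them by thin tubular neighborhoods of the relevant unstable manifolds. The guiding principle is the classical one from Morse theory on closed manifolds (attach a $k$-handle for each index-$k$ critical point), but here one must handle three flavors of critical points — the interior ones $p \in Cr_k(f)$, the positive boundary ones $\gamma \in Cr_k^+(f_N)$, and the negative boundary ones $\delta \in Cr_{k-1}^-(f_N)$ — and, crucially, a $\delta$ of boundary-index $k-1$ should contribute a $k$-handle to $M$ (its unstable manifold $U_\delta$ is a half-ball $H^{k+1}$ meeting $N_i\times\{0\}$ in $\partial H^{k+1}\cong B^k$, and only the $B^k$-worth of new directions is attached at this stage, with the remaining direction into $M\setminus N$ coming later). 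This is why the filtration is refined into the two-step passage $M^{k-1}\subset \tilde M^k\subset M^k$: the intermediate $\tilde M^k$ absorbs the $\delta$-handles of boundary-index $k-1$, and then $M^k$ absorbs the $p$- and $\gamma$-handles of index $k$.

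**First I would** fix, once and for all, a $\overline{X}_f$-adapted function: a smooth $F:M\to\mathbb{R}$ that is strictly decreasing along nonconstant flow lines, whose critical points are exactly the $p$'s and the boundary critical points, and whose values can be assumed self-indexing in a suitable sense after the reordering lemmas (Lemma 4.1 and Lemma 4.2 of the excerpt guarantee no "wrong-way" trajectories between critical points of non-ascending index-type, which is exactly what lets one shuffle the values so that the index-$k$ data sits in one band). I would then set $M^k$ (resp. $\tilde M^k$) to be an open neighborhood of the union of all stable manifolds of critical objects of "level $\le k$" (resp. $\le k$ for $\delta$'s but $\le k-1$ for $p$'s and $\gamma$'s), realized concretely as $F^{-1}((-\infty, a_k))$ pushed forward slightly by the flow so that its frontier is transverse to $\overline{X}_f$; transversality of $\partial M^k$ and $\partial\tilde M^k$ to $\overline{X}_f$ is then automatic from the construction, and smoothness follows because regular level sets of $F$ are smoothed in the standard way. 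The intersection statements — $\partial M^{k-1}\pitchfork U_\delta$, $\partial\tilde M^k\pitchfork U_p$, $\partial\tilde M^k\pitchfork U_\gamma$ — reduce to the Morse–Smale transversality hypotheses together with the fact that a frontier $F^{-1}(a)$ meets an unstable manifold transversely once $a$ is a regular value lying below the critical value of the corresponding critical object; the diffeomorphism-type claims ($U_\delta\setminus M^{k-1}\cong$ closed half-ball $D^k$, $U_p\setminus\tilde M^k\cong D^k$, $U_\gamma\setminus\tilde M^k\cong D^k$) come from the explicit description in Section 3 of $U_p\cong B^k$, $U_\gamma\cong B^k$, $U_\delta\cong H^{k+1}$ with $U_\delta\cap(N_i\times\{0\})\cong\partial H^{k+1}\cong B^k$, intersected with the appropriate half-space cut out by the frontier.

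**The deformation-retract statements are the heart of the matter**, and the hard part will be making the retraction simultaneous over all index-$k$ critical objects while keeping it compatible across the two sub-steps. The mechanism is the usual one: given $\overline{M^{k-1}}$ and the handles $\bigcup_{\delta\in Cr^-_{k-1}(f_N)}U_\delta$, one shows $\overline{M^{k-1}}\cup\bigcup_\delta U_\delta$ is a deformation retract of $\overline{\tilde M^k}$ by flowing points of $\overline{\tilde M^k}$ forward along $-\overline{X}_f$ until they either land in $\overline{M^{k-1}}$ or converge to one of the $\delta$'s; points never converging to a higher critical object because of the level bookkeeping, and the flow-out of each $\delta$ being precisely its half-ball unstable manifold, which together with $\overline{M^{k-1}}$ sweeps out all of $\overline{\tilde M^k}$ up to retraction. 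One must check the retraction is continuous at the "seams" where trajectories asymptote to the $\delta$'s — this is the standard delicate point in handle-decomposition proofs and is handled by a Grobman–Hartman / local-model argument near each $\delta$, using that $U_\delta$ and $S_\delta$ meet transversely there and that the linearization is hyperbolic along the boundary directions (the extra direction transverse to $N_i$ in $U_\delta$ is the expanding $r\partial/\partial r$ direction, since $f_{N_i}(\delta)<0$ forces the coefficient $2f_{N_i}(\delta)<0$, so $-\overline{X}_f$ pushes outward from $N$, i.e. the flow is well-behaved). The same argument, with $p$'s and $\gamma$'s in place of $\delta$'s, gives $\overline{\tilde M^k}\cup\bigcup_p U_p\cup\bigcup_\gamma U_\gamma$ as a deformation retract of $\overline{M^k}$; here $U_\gamma$ lives entirely in $N_i\times\{0\}$ so it is attached "within the boundary," which is consistent with $\gamma$ being a generator of the Morse complex. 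The main obstacle, then, is not the existence of the sets but the verification that the retractions glue to genuine continuous (indeed, deformation) retractions uniformly in the critical data — the combinatorics of which trajectories can asymptote where is controlled by Lemmas 4.1–4.2, and the local regularity at each critical object is controlled by hyperbolicity, so the proof is an organized patching of these two inputs.
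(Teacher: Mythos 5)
Your overall two-step architecture --- half $k$-handles for $\delta\in Cr_{k-1}^-(f_N)$ in the passage $M^{k-1}\subset\tilde{M}^k$, then ordinary $k$-handles for $p\in Cr_k(f)$ and $\gamma\in Cr_k^+(f_N)$ in $\tilde{M}^k\subset M^k$ --- is exactly the paper's, and your treatment of the deformation retracts (flow along $-\overline{X}_f$, continuity at the seams via hyperbolicity at the rest points) is at least as detailed as the paper's, which simply asserts these retracts after attaching handles. Where you genuinely diverge is in how the open sets are produced. The paper introduces no auxiliary function: it starts from $\varepsilon$-balls and half-balls around the index-$0$ critical points and proceeds by induction, using only that the frontier of the previous stage is smooth and transversal to $\overline{X}_f$ to conclude (with Morse--Smale) that the next batch of unstable manifolds meets that frontier transversely with closed (half-)ball complements, and then attaches explicit handles $\overline{B^k}\times B^{n-k}$, resp.\ $\{y_1^2+\cdots+y_{k-1}^2+r^2\le 1,\ r\ge0\}\times B^{n-k}$. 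You instead take sublevel sets of a self-indexing Lyapunov function $F$ for $\overline{X}_f$. That buys you smoothness and transversality of the frontiers for free, but it front-loads all the difficulty into the existence of $F$, which you assert rather than prove. Note that $f$ itself cannot be repaired into such an $F$ by rearrangement alone: on the collar $f=r^2f_{N_i}+c_i$, so $f\equiv c_i$ and $df\equiv 0$ on all of $N_i\times\{0\}$, every boundary point is a degenerate zero of $df$, and $f$ is not strictly decreasing along the nonconstant flow lines of $-\overline{X}_f$ inside $N_i\times\{0\}$. So you must build $F$ from scratch as a Lyapunov function for the vector field $\overline{X}_f$ (nondegenerate at the finitely many rest points, strictly decreasing along the flow elsewhere) and then run a rearrangement argument in the spirit of \cite{mi} using Lemmas 4.1--4.2; this is doable, but it is essentially the same amount of work as the paper's induction and should be carried out rather than cited.

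One concrete slip to fix: for $\delta\in Cr_{k-1}^-(f_N)$ the unstable manifold is $U_\delta\cong H^{(k-1)+1}=H^k$, meeting $N_i\times\{0\}$ in $B^{k-1}$, not $H^{k+1}$ meeting it in $B^k$ as you wrote; and the half-handle attached at the $\tilde{M}^k$ stage is the whole thickened closed half-ball $\{y_1^2+\cdots+y_{k-1}^2+r^2\le1,\ r\ge0\}\times B^{n-k}$, i.e.\ all of $U_\delta\setminus M^{k-1}$ including its interior normal direction --- nothing is deferred to a later stage. Your later statement that $U_\delta\setminus M^{k-1}$ is the $k$-dimensional closed half-ball is the correct one; the parenthetical in your first paragraph contradicts it and should be deleted.
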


We call the sequence 
 $M^{-1}=\emptyset\subset M^0\subset M^1\subset \cdots \subset M^n=M$ 
a handlebody decomposition of $M$. 

\begin{proof}
We construct $\tilde{M}^k$ and $M^k$ inductively. 
For $p\in Cr_0(f)$,
let $(x_1,\ldots,x_n)$ be a local coordinate centered at $p$ in $M$ and
$B_{\varepsilon}(p):=\{x_1^2+\cdots+x_n^2<\varepsilon^2\}\subset M$;
and for $\gamma\in Cr_0^+(f_N)$,
let $(y_1,\ldots,y_{n-1})$ be a local coordinate centered at $\gamma$ in $N$
and $B_{\varepsilon}(\gamma):=\{y_1^2+\cdots+y_{n-1}^2+r^2<\varepsilon^2\}\subset M$.
Here we put 
$\partial B_{\varepsilon}(\gamma):=\{y_1^2+\cdots+y_{n-1}^2+r^2=\varepsilon^2, r\geq 0\}$.
Then we may take $\varepsilon$ to be small 
so that the closures of $B_{\varepsilon}(p)$ and $B_{\varepsilon}(\gamma)$ in $M$ 
are mutually disjoint,
and $\partial B_{\varepsilon}(p)$ and $\partial B_{\varepsilon}(\gamma)$
are transversal to $\overline{X}_f$.
We define 
$M^0:=\bigcup_{p\in Cr_0(f)}B_{\varepsilon}(p)\cup 
\bigcup_{\gamma\in Cr_0^+(f_N)}B_{\varepsilon}(\gamma)$, and then
\begin{itemize}
\item $\bigcup_{p\in Cr_0(f)}U_p\cup \bigcup_{\gamma\in Cr_0^+(f_N)}U_{\gamma}$
is a deformation retract of $\overline{M^0}$, and 
\item $\partial M^0$ 
is smooth and transversal to $\overline{X}_f$.
\end{itemize}
This is the first step of $k=0$ to construct the handlebody decomposition of $M$.
Suppose we have $M^{-1}=\tilde{M}^0=\emptyset \subset M^0\subset 
\tilde{M}^1\subset M^1\subset \cdots \subset \tilde{M}^{k-1}\subset M^{k-1}$
as in the theorem.
Since $\partial M^{k-1}$ is smooth and transversal to $\overline{X}_f$, for $\delta\in Cr_{k-1}^-(f_N)$, 
\begin{itemize}
\item $\partial M^{k-1}$ and $U_{\delta}$ intersect transversely,
\end{itemize}
and moreover, since $f$ is Morse--Smale,
\begin{itemize}
\item $U_{\delta}\setminus M^{k-1}$
is diffeomorphic to the $k$-dimensional closed half-ball,
\end{itemize}
where the $k$-dimensional closed half-ball is diffeomorphic to 
$\{y_1^2+\cdots+y_{k-1}^2+r^2\leq 1,r\geq 0\}$.
Hence we may attach half $k$-handles for $\delta\in Cr_{k-1}^-(f_N)$ to $\overline{M^{k-1}}$
and obtain $\tilde{M}^k$ so that
\begin{itemize}
\item $\overline{M^{k-1}}\cup \bigcup_{\delta\in Cr_{k-1}^-(f_N)}U_{\delta}$
is a deformation retract of $\overline{\tilde{M}^{k}}$, and
\item $\partial \tilde{M}^k$ is smooth and transversal to $\overline{X}_f$,
\end{itemize}
where the half $k$-handle is diffeomorphic to 
$\{y_1^2+\cdots+y_{k-1}^2+r^2\leq 1, r\geq 0\}\times B^{n-k}$
and the attaching map is from $\{y_1^2+\cdots +y_{k-1}^2+r^2=1, r\geq0\}\times B^{n-k}$ 
to $\partial M^{k-1}$.
Since $\partial \tilde{M}^k$ is transversal to $\overline{X}_f$,
\begin{itemize}
\item for $p\in Cr_k(f)$,
$\partial \tilde{M}^k$ and $U_p$ intersect transversely, and 
\item for $\gamma\in Cr_k^+(f_N)$,
$\partial \tilde{M}^k$ and $U_{\gamma}$ intersect transversely,
\end{itemize}
and moreover, since $f$ is Morse--Smale, 
\begin{itemize}
\item $U_p\setminus \tilde{M}^k$ is diffeomorphic to the $k$-dimensional closed ball, and
\item $U_{\gamma}\setminus \tilde{M}^k$ is diffeomorphic to the $k$-dimensional closed ball.
\end{itemize}
Hence we may attach $k$-handles for $p\in Cr_k(f)$ and $\gamma\in Cr_k^+(f_N)$ to 
$\overline{\tilde{M}^k}$ and obtain $M^k$ so that
\begin{itemize}
\item $\overline{\tilde{M}^k}\cup \bigcup_{p\in Cr_k(f)}U_p\cup 
\bigcup_{\gamma\in Cr_k^+(f_N)}U_{\gamma}$ is a deformation retract of $\overline{M^k}$, and
\item $\partial M^k$ is smooth and transversal to $\overline{X}_f$.
\end{itemize}
Then these $\tilde{M}^k$ and $M^k$ satisfy the conditions as in the theorem.
Therefore we obtain the handlebody decomposition 
$M^{-1}=\emptyset\subset M^0\subset M^1\subset \cdots \subset M^n=M$
by induction.
\end{proof}

\section{\bf Relative cycles}

In this section we introduce relative cycles of critical points 
to define our Morse complex on manifolds with boundary.
This new technique is also very important for the future applications.

First we confirm notation.
Let $M$ be an $n$-dimensional oriented compact manifold with boundary $N$ as before, and 
$g$ and $f$ a cone end Riemannian metric and 
a cone end Morse--Smale function on $M\setminus N$, respectively.
Moreover, let $M^{-1}=\tilde{M}^0=\emptyset \subset M^0\subset \tilde{M}^1\subset M^1
\subset \cdots \subset \tilde{M}^n\subset M^n=M$ be the sequence of open subsets constructed in 
Theorem 4.3.

Let $B^k_r:=\{(x_1,\ldots,x_k):x_1^2+\cdots+x_k^2<r^2\}$ 
and $\partial B^k_r:=\overline{B^k_r}\setminus B^k_r$,
and similarly, let
$H^k_r:=\{(x_1,\ldots,x_k):x_1^2+\cdots+x_k^2<r^2, x_k\geq 0\}$
and $\partial H^k_r:=\{(x_1,\ldots,x_k)\in H^k_r: x_k=0\}$.

For $0<\varepsilon<1$, we define a diffeomorphism 
$\rho_{\varepsilon}:M\to \rho_{\varepsilon}(M)\subset M$ so that
$\rho_{\varepsilon}(x)=x$ for $x\notin[0,\varepsilon)\times N$,
and $\rho_{\varepsilon}(x,r)=(x,r/2+\varepsilon/2)$ for $(x,r)\in N\times [0,\varepsilon/2)$.

For $p\in Cr_k(f)$, we fix a diffeomorphism $\psi_p:B^k_1\to U_p$ with $\psi_p(0)=p$.
Since $\partial \tilde{M}^k$ and $U_p$ intersect transversely, and 
$U_p\setminus \tilde{M}^k$ is diffeomorphic to the $k$-dimensional closed ball,
there exists $0<r_p<1$ such that $\psi_p(\partial B^k_{r_p})\subset \tilde{M}^k$.
We call the restriction 
$\psi_p|_{\overline{B^k_{r_p}}}: \overline{B^k_{r_p}}\to U_p$ 
a relative cycle for $p$, and denote by
$\sigma_p:\overline{B^k_{r_p}}\to U_p$.
Note that $\sigma_p$ is an embedding,
and $S_p$ and the image of $\sigma_p$ intersect transversely 
and positively at $p$.
Similarly, for $\gamma\in Cr_k^+(f_N)$, 
we fix a diffeomorphism $\psi_{\gamma}:B^k_1\to U_{\gamma}$ with $\psi_{\gamma}(0)=\gamma$.
Since 
\begin{itemize}
\item $\overline{\tilde{M}^k}\cup
\bigcup_{p\in Cr_k(f)}U_p\cup \bigcup_{\gamma\in Cr_k^+(f_N)}U_{\gamma}$
is a deformation retract of $\overline{M^k}$, and 
\item $U_{\gamma}\setminus \tilde{M}^k$ is diffeomorphic to the $k$-dimensional closed ball,
\end{itemize}
there exist $0<\varepsilon<1$ and $0<r_{\gamma}<1$ such that 
$\rho_{\varepsilon}\circ \psi_{\gamma}(B^k_{r_{\gamma}})\subset M^k$
and 
$\rho_{\varepsilon}\circ\psi_{\gamma}(\partial B^k_{r_{\gamma}})\subset \tilde{M}^k$.
Note that, since $\partial \tilde{M}^k$ is smooth and transversal to $\overline{X}_f$,
$\varphi_t(\rho_{\varepsilon}\circ \psi_{\gamma}(\partial B^k_{r_{\gamma}}))$ is contained in
$\tilde{M}^k$, for $t\geq 0$,
where $\varphi_t$ is the isotopy of $-X_f$.
Moreover, since $\varepsilon>0$ and
\begin{itemize}
\item $\overline{M^{k-1}}\cup\bigcup_{\delta\in Cr_{k-1}^-(f_N)}U_{\delta}$
is a deformation retract of $\overline{\tilde{M}^k}$, and 
\item $\partial M^{k-1}$ are smooth and transversal to $\overline{X}_f$,
\end{itemize}
there exists $T_{\gamma}>0$ such that 
$\varphi_{T_{\gamma}}(\rho_{\varepsilon}\circ \psi_{\gamma}(\partial B^k_{r_{\gamma}}))
\subset M^{k-1}$.
Now we define a map $\sigma_{\gamma}:
\overline{B^k_{r_{\gamma}}}\cup (\partial B^k_{r_{\gamma}}\times [0,T_{\gamma}])\to M$ as follows:
first we glue $\overline{B^k_{r_{\gamma}}}$ and $\partial B^k_{r_{\gamma}}\times [0,T_{\gamma}]$
by the natural identification of 
$\partial B^k_{r_{\gamma}}
\subset \overline{B^k_{r_{\gamma}}}$ 
with $\partial B^k_{r_{\gamma}}\times \{0\}
\subset\partial B^k_{r_{\gamma}}\times [0,T_{\gamma}]$;
and then we define $\sigma_{\gamma}$ by 
$\sigma_{\gamma}(x):=\rho_{\varepsilon}\circ\psi_{\gamma}(x)$
if $x\in \overline{B^k_{r_{\gamma}}}$,
and $\sigma_{\gamma}(x,t):=\varphi_t(\rho_{\varepsilon}\circ\psi_{\gamma}(x))$ 
if $(x,t)\in\partial B^k_{r_{\gamma}}\times [0,T_{\gamma}]$.
We call $\sigma_{\gamma}:\overline{B^k_{r_{\gamma}}}
\cup (\partial B^k_{r_{\gamma}}\times[0,T_{\gamma}])
\to M$ a relative cycle of $\gamma$.
Note that $\sigma_{\gamma}$ is a piecewise embedding,
and $S_{\gamma}$ and the image of $\sigma_{\gamma}$ intersect transversely
and positively at $\rho_{\varepsilon}(\gamma)$. 

\section{\bf Gradient trajectories}

Let $M$ be an $n$-dimensional oriented compact manifold with boundary $N$ as before, and 
$g$ and $f$ a cone end Riemannian metric and 
a cone end Morse--Smale function on $M\setminus N$, respectively.

Let $p,p'\in M\setminus N$ be critical points of $f$, and
$u:\mathbb{R}\to M\setminus N$ a map which satisfies $du/dt=-X_f\circ u$
with $\lim_{t\to-\infty}u(t)=p$ and $\lim_{t\to\infty}u(t)=p'$.
We call such $u$ a gradient trajectory from $p$ to $p'$,
and moreover, we call such $u$ up to parameter shift an unparameterized gradient trajectory.
We define $\mathcal{M}(p,p')$ 
to be the set of the unparameterized gradient trajectories from $p$ to $p'$.
Since an intersection point $x\in S_{p'}\cap \sigma_p(\partial B^k_{r_p})$ corresponds to 
the unparameterized gradient trajectory from $p$ to $p'$ through $x$,
$\mathcal{M}(p,p')$ can be identified with $S_{p'}\cap \sigma_p(\partial B^k_{r_p})$.
Let $p\in Cr_k(f)$ and $p'\in Cr_{k-1}(f)$.
For $x\in S_{p'}\cap \sigma_p(\partial B^k_{r_p})$, we define $\epsilon_x:=1$ if the orientations of
$T_xS_{p'}\oplus T_x\sigma_p(\partial B^k_{r_p})$ and $T_xM$ coincide, 
and $\epsilon_x:=-1$ otherwise.
Then we assign $\epsilon_x$ to $u\in\mathcal{M}(p,p')$ 
passing through $x\in S_{p'}\cap \sigma_p(\partial B^k_{r_p})$,
and put $\sharp\mathcal{M}(p,p'):=\sum_{x\in S_{p'}\cap \sigma_p(\partial B^k_{r_p})}\epsilon_x$,
which is nothing but the intersection number of $S_{p'}$ and $\sigma_p(\partial B^k_{r_p})$.

Similarly, for $p\in Cr_k(f)$ and $\gamma\in Cr_{k-1}^+(f_N)$, we define $\mathcal{M}(p,\gamma)$
to be the set of the unparameterized gradient trajectories $u:\mathbb{R}\to M\setminus N$
which satisfies $du/dt=-X_f\circ u$
with $\lim_{t\to-\infty}u(t)=p$ and $\lim_{t\to\infty}u(t)=(\gamma,0)\in N\times\{0\}\subset M$;
and we define $\sharp\mathcal{M}(p,\gamma)$ to be the intersection number of
$S_{\gamma}$ and $\sigma_p(\partial B^k_{r_p})$;
and moreover, for $\delta\in Cr_{k-1}^-(f_N)$ and $p\in Cr_{k-1}(f)$,
we define $\mathcal{M}(p,\gamma)$
to be the set of the unparameterized gradient trajectories $u:\mathbb{R}\to M\setminus N$
which satisfies $du/dt=-X_f\circ u$
with $\lim_{t\to-\infty}u(t)=(\delta,0)\in N\times \{0\}\subset M$ and $\lim_{t\to\infty}u(t)=p$;
and 
for $\delta\in Cr_{k-1}^-(f_{N_i})$ and $\gamma\in Cr_{k-1}^+(f_{N_j})$ with $c_i>c_j$,
we define $\mathcal{M}(\delta,\gamma)$
to be the set of the unparameterized gradient trajectories $u:\mathbb{R}\to M\setminus N$
which satisfies $du/dt=-X_f\circ u$
with $\lim_{t\to-\infty}u(t)=(\delta,0)\in N_i\times \{0\}\subset M$ and 
$\lim_{t\to\infty}u(t)=(\gamma,0)\in N_j\times \{0\}\subset M$;
and we define $\sharp\mathcal{M}(\delta,p)$ and $\sharp\mathcal{M}(\delta,\gamma)$
to be the intersection numbers, similarly.

On the other hand, for $\gamma\in Cr_k^+(f_{N_i})$ and 
$\gamma'\in Cr_{k-1}^+(f_{N_i})$, 
we define $\mathcal{N}(\gamma,\gamma')$
to be the set of the unparameterized gradient trajectories $u:\mathbb{R}\to N_i$
which satisfies $du/dt=-X_{f_{N_i}}\circ u$
with $\lim_{t\to-\infty}u(t)=\gamma$ and $\lim_{t\to\infty}u(t)=\gamma'$.
We fix a diffeomorphism $\psi_{\gamma}:B^k_1\to U_{\gamma}$ with $\psi_{\gamma}(0)=\gamma$.
Let $0<r<1$.
For $x\in S_{\gamma'}\cap \psi_{\gamma}(\partial B^k_r)$,
we define $\epsilon_x:=1$ if the orientations of 
$T_xS_{\gamma'}\oplus T_x\psi_{\gamma}(\partial B^k_r)$
and $T_xM$ coincide, and $\epsilon_x:=-1$ otherwise.
Then we assign $\epsilon_x$ to $u\in\mathcal{N}(\gamma,\gamma')$ 
passing through $x\in S_{\gamma'}\cap \psi_{\gamma}(\partial B^k_r)$,
and put $\sharp\mathcal{N}(\gamma,\gamma'):=
\sum_{x\in S_{\gamma'}\cap \psi_{\gamma}(\partial B^k_r)}\epsilon_x$,
which is nothing but the intersection number of $S_{\gamma'}$ and 
$\psi_{\gamma}(\partial B^k_r)$.
Similarly, for $\delta\in Cr_k^-(f_{N_i})$ and 
$\delta'\in Cr_{k-1}^-(f_{N_i})$, 
we define $\mathcal{N}(\delta,\delta')$
to be the set of the unparameterized gradient trajectories $u:\mathbb{R}\to N_i$
which satisfies $du/dt=-X_{f_{N_i}}\circ u$
with $\lim_{t\to-\infty}u(t)=\delta$ and $\lim_{t\to\infty}u(t)=\delta'$;
and we define $\sharp\mathcal{N}(\delta,\delta')$ to be the intersection number of
$S_{\delta'}$ and $\psi_{\delta}(\partial B^k_r)$.

For $\gamma\in Cr_k^+(f_{N_i})$ and 
$\delta\in Cr_{k-1}^-(f_{N_i})$, 
we define $\mathcal{N}(\gamma,\delta)$
to be the set of the unparameterized gradient trajectories $u:\mathbb{R}\to N_i$
which satisfies $du/dt=-X_{f_{N_i}}\circ u$
with $\lim_{t\to-\infty}u(t)=\gamma$ and $\lim_{t\to\infty}u(t)=\delta$.
For $x\in S_{\delta}\cap \psi_{\gamma}(\partial B^k_r)\subset N_i$,
let $v_{in}\in T_xM$ be an inward-pointing vector,
$\{v_1,\ldots,v_{n-k+1}\}$ an oriented basis of $T_xS_{\delta}$,
and $\{v_{n-k+2},\ldots,v_n\}$ an oriented basins of $T_x\psi_{\gamma}(\partial B^k_r)$.
We define $\epsilon_x:=1$ if the orientations of 
$\{v_1,\ldots,v_{n-k},v_{in},v_{n-k+2},\ldots,v_n\}$
and $T_xM$ coincide, and $\epsilon_x:=-1$ otherwise.
Then we assign $\epsilon_x$ to $u\in\mathcal{N}(\gamma,\delta)$ 
passing through $x\in S_{\delta}\cap \psi_{\gamma}(\partial B^k_r)$,
and put $\sharp\mathcal{N}(\gamma,\delta):=
\sum_{x\in S_{\delta}\cap \psi_{\gamma}(\partial B^k_r)}\epsilon_x$.

We remark that, for $\delta\in Cr_k^-(f_{N_i})$ and $\gamma\in Cr_{k-1}^+(f_{N_i})$,
there is no map $u:\mathbb{R}\to N_i$
which satisfies $du/dt=-X_{f_{N_i}}\circ u$
with $\lim_{t\to-\infty}u(t)=\delta$ and $\lim_{t\to\infty}u(t)=\gamma$
since $f_{N_i}(\delta)<0<f_{N_i}(\gamma)$.

\section{\bf Morse homology of manifolds with boundary}

Finally we recall our Morse homology of manifolds with boundary, introduced in \cite{ak1},
and re-certify that the Morse homology is isomorphic to the absolute singular homology
through connecting homomorphisms.

Let $M$ be an $n$-dimensional oriented compact manifold with boundary $N$ as before, and 
$g$ and $f$ a cone end Riemannian metric and 
a cone end Morse--Smale function on $M\setminus N$, respectively. 
Moreover, let $M^{-1}=\emptyset \subset M^0\subset M^1\subset \cdots \subset M^n=M$ 
be the handlebody decomposition of $M$ as in Theorem 4.3.
Then, owing to the conditions of $M^k$, 
\begin{equation}
H_l(M^k,M^{k-1};\mathbb{Z})
=\begin{cases}
\bigoplus_{p\in Cr_k(f)}\mathbb{Z}[\sigma_p]\oplus 
\bigoplus_{\gamma\in Cr_k^+(f_N)}\mathbb{Z}[\sigma_{\gamma}], & l=k,
\\
0, & {\rm otherwise},
\end{cases}
\end{equation}
where $[\sigma_p]$ and $[\sigma_{\gamma}]$ are the relative cycles 
$\sigma_p:(\overline{B^k_{r_p}},\partial B^k_{r_p})\to (M^k,M^{k-1})$ and
$\sigma_{\gamma}:(\overline{B_{r_{\gamma}}^k}
\cup(\partial B_{r_{\gamma}}^k\times[0,T_{\gamma}]),
\partial B_{r_{\gamma}}^k\times \{T_{\gamma}\})\to (M^k,M^{k-1})$.

We denote by $\delta_k:H_k(M^k,M^{k-1};\mathbb{Z})\to H_{k-1}(M^{k-1},M^{k-2};\mathbb{Z})$
the connecting homomorphism.
The connecting homomorphisms satisfy $\delta_{k-1}\circ\delta_k=0$, 
and we obtain a chain complex 
$(H_*(M^*,M^{*-1};\mathbb{Z}),\delta_*)$.
Because of (4) we can prove that the homology of $(H_*(M^*,M^{*-1};\mathbb{Z}),\delta_*)$
is isomorphic to the absolute singular homology of $M$ 
in the same way as CW decompositions.

On the other hand,
for a relative cycle $[\sigma:(\Sigma,\partial \Sigma)\to (M^k,M^{k-1})]$, 
the connecting homomorphism $\delta_k$ can be written as
\[
\delta_k[\sigma:(\Sigma,\partial \Sigma)\to (M^k,M^{k-1})]
=[\sigma:(\partial \Sigma,\emptyset)\to (M^{k-1},M^{k-2})].
\]
Since $S_p$ and the image of $\sigma_p$ intersect transversely and positively at $p$, 
we may think of $S_p$ as the dual base of $[\sigma_p]$,
and similarly since $S_{\gamma}$ and the image of $\sigma_{\gamma}$ intersect transversely
and positively at $\rho_{\varepsilon}(\gamma)$,
we may think of $S_{\gamma}$ as the dual base of $[\sigma_{\gamma}]$.  
Hence $\delta_k$ can be written as
\begin{eqnarray*}
\lefteqn{\delta_k[\sigma:(\Sigma,\partial \Sigma)\to(M^k,M^{k-1})]}
\\
&=&
\sum_{p\in Cr_{k-1}(f)}\sharp(S_p\cap \sigma(\partial \Sigma))[\sigma_p]
+\sum_{\gamma\in Cr_{k-1}^+(f_N)}
\sharp(S_{\gamma}\cap \sigma(\partial \Sigma))[\sigma_{\gamma}],
\end{eqnarray*}
where $\sharp(S_p\cap \sigma(\partial \Sigma))$ and 
$\sharp(S_{\gamma}\cap \sigma(\partial \Sigma))$
are the intersection numbers of $S_p$ and $\sigma(\partial \Sigma)$, and 
$S_{\gamma}$ and $\sigma(\partial \Sigma)$, respectively.
This description was essentially given by J. Milnor in \cite{mi}.

Now we define a free $\mathbb{Z}$ module $CM_k(f)$ by
\[
CM_k(f):=\bigoplus_{p\in Cr_k(f)}\mathbb{Z}p\oplus 
\bigoplus_{\gamma\in Cr_k^+(f_N)}\mathbb{Z}\gamma,
\]
which is isomorphic to $H_k(M^k,M^{k-1};\mathbb{Z})$ by identifying $p$ and $\gamma$ with 
the relative cycles $[\sigma_p]$ and $[\sigma_{\gamma}]$, respectively,
and define a linear map $\partial_k:CM_k(f)\to CM_{k-1}(f)$ by, 
for $p\in Cr_k(f)$,
\begin{eqnarray*}
\partial_k p&:=&
\sum_{p'\in Cr_{k-1}(f)}\sharp\mathcal{M}(p,p')p'
+\sum_{\gamma\in Cr_{k-1}^+(f_N)}\sharp\mathcal{M}(p,\gamma)\gamma,
\end{eqnarray*}
and for $\gamma\in Cr_k^+(f_N)$,
\begin{eqnarray*}
\partial_k\gamma &:=&
\sum_{p\in Cr_{k-1}(f)}\sum_{\delta\in Cr_{k-1}^-(f_N)}
\sharp\mathcal{N}(\gamma,\delta)\sharp\mathcal{M}(\delta,p)p
\\
&&
+\sum_{\gamma'\in Cr_{k-1}^+(f_N)}\sum_{\delta\in Cr_{k-1}^-(f_N)}
\sharp\mathcal{N}(\gamma,\delta)\sharp\mathcal{M}(\delta,\gamma')\gamma'
\\
&&
+\sum_{\gamma'\in Cr_{k-1}^+(f_N)}\sharp\mathcal{N}(\gamma,\gamma')\gamma'.
\end{eqnarray*}

Again our main theorem is that:

\begin{thm}
$(CM_*(f),\partial_*)$ is a chain complex, i.e. $\partial_{k-1}\circ\partial_k=0$, 
and the homology is isomorphic to the {\it absolute} singular homology of~$M$.
\end{thm}

\begin{proof}
We already know $CM_k(f)\cong H_k(M^k,M^{k-1};\mathbb{Z})$, and hence 
we show that $\partial_k=\delta_k$.
For $p\in Cr_k(f)$, 
by the descriptions of the connecting homomorphisms and the moduli spaces of gradient trajectories,
\begin{eqnarray*}
\lefteqn{\delta_k[\sigma_p]}
\\
&=&\sum_{p'\in Cr_{k-1}(f)}\sharp(S_{p'}\cap \sigma_p(\partial R^k_{r_p}))
[\sigma_{p'}]
+
\sum_{\gamma\in Cr_{k-1}^+(f_N)}\sharp(S_{\gamma}\cap \sigma_p(\partial R^k_{r_p}))
[\sigma_{\gamma}]
\\
&=&
\sum_{p'\in Cr_{k-1}(f)}\sharp\mathcal{M}(p,p')[\sigma_{p'}]
+
\sum_{\gamma\in Cr_{k-1}^+(f_N)}\sharp\mathcal{M}(p,\gamma)[\sigma_{\gamma}].
\end{eqnarray*}
Hence $\partial_kp=\delta_k[\sigma_p]$ under the identification of $p$ and $\gamma$
with $[\sigma_p]$ and $[\sigma_{\gamma}]$, respectively.

Recall that the relative cycle 
$\sigma_{\gamma}:\overline{B^k_{r_{\gamma}}}\cup(\partial B^k_{r_{\gamma}}\times [0,T_{\gamma}])
\to M$ for $\gamma\in Cr_k^+(f_N)$ is given by
$\sigma_{\gamma}(x):=\rho_{\varepsilon}\circ\psi_{\gamma}(x)$ 
if $x\in \overline{B^k_{r_{\gamma}}}$,
and $\sigma_{\gamma}(x,t):=\varphi_t(\rho_{\varepsilon}\circ\psi_{\gamma}(x))$ 
if $(x,t)\in\partial B^k_{r_{\gamma}}\times [0,T_{\gamma}]$.
Then, for $\gamma\in Cr_k^+(f_N)$, $\delta_k[\sigma_{\gamma}]$ can be written as
\begin{eqnarray*}
\delta_k[\sigma_{\gamma}]
&=&\sum_{p\in Cr_{k-1}(f)}\sharp(S_p\cap 
\sigma_{\gamma}(\partial B^k_{r_{\gamma}}\times\{T_{\gamma}\})
[\sigma_p]
\\
&&
+\sum_{\gamma'\in Cr_{k-1}^+(f_N)}\sharp(S_{\gamma'}\cap 
\sigma_{\gamma}(\partial B^k_{r_{\gamma}}\times\{T_{\gamma}\})
[\sigma_{\gamma'}].
\end{eqnarray*}

Let $\gamma\in Cr_k^+(f_{N_i})$.
For $p\in Cr_{k-1}(f)$,
an intersection point $x\in S_p\cap 
\sigma_{\gamma}(\partial B^k_{r_{\gamma}}\times\{T_{\gamma}\})$ corresponds to a pair of
\begin{itemize}
\item a gradient trajectory $u:(-\infty, r_{\gamma}]\to N_i$
which satisfies $du/dt=-X_{f_{N_i}}\circ u$ 
with $\lim_{t\to -\infty}u(t)=\gamma$, and
\item a gradient trajectory $v:[0, \infty)\to M\setminus N$
passing through $x$
which satisfies $dv/dt=-X_f\circ v$
with $v(0)=\rho_{\varepsilon}(u(r_{\gamma}))$ and $\lim_{t\to \infty}v(t)=p$.
\end{itemize}
As $\varepsilon\to 0$, $v$ breaks into two pieces: 
\begin{itemize}
\item one is a gradient trajectory $u':[0,\infty)\to N_i$
which satisfies $du'/dt=-X_{f_{N_i}}\circ u'$ 
with $u'(0)=u(r_{\gamma})$ and 
$\lim_{t\to -\infty}u(t)=\delta\in Cr_{k-1}^-(f_{N_i})$, and
\item the other is an unparameterized  gradient trajectory $v':\mathbb{R}\to M\setminus N$
which satisfies $dv'/dt=-X_f\circ v'$
with $\lim_{t\to-\infty}v'(t)=(\delta,0)\in N_i\times\{0\}$ and $\lim_{t\to \infty}v'(t)=p$.
\end{itemize}
Note that $u$ and $u'$ give an unparameterized gradient trajectory $u'':\mathbb{R}\to N_i$
which satisfies $du''/dt=-X_{f_{N_i}}\circ u''$
with $\lim_{t\to-\infty}u''(t)=\gamma$ and $\lim_{t\to \infty}u''(t)
\\
=\delta$.
Then an intersection point $x\in S_p\cap 
\sigma_{\gamma}(\partial B^k_{r_{\gamma}}\times\{T_{\gamma}\})$
gives $(u'',v')\in \mathcal{N}(\gamma,\delta)\times \mathcal{M}(\delta,p)$.
Conversely, by the gluing analysis, 
$(u'',v')\in \mathcal{N}(\gamma,\delta)\times \mathcal{M}(\delta,p)$ gives 
$x\in S_p\cap \sigma_{\gamma}(\partial B^k_{r_{\gamma}}\times\{T_{\gamma}\})$.
Hence 
\[
\sharp(S_p\cap 
\sigma_{\gamma}(\partial B^k_{r_{\gamma}}\times\{T_{\gamma}\})
=\sharp\mathcal{N}(\gamma,\delta)\sharp\mathcal{M}(\delta,p).
\]

Let $\gamma\in Cr_k^+(f_{N_i})$.
If $\gamma'\in Cr_{k-1}^+(f_{N_j})$ with $c_i>c_j$,
an intersection point $x\in S_{\gamma'}\cap 
\sigma_{\gamma}(\partial B^k_{r_{\gamma}}\times\{T_{\gamma}\})$ corresponds to a pair of
\begin{itemize}
\item a gradient trajectory $u:(-\infty, r_{\gamma}]\to N_i$
which satisfies $du/dt=-X_{f_{N_i}}\circ u$ 
with $\lim_{t\to -\infty}u(t)=\gamma$, and
\item a gradient trajectory $v:[0, \infty)\to M\setminus N$
passing through $x$
which satisfies $dv/dt=-X_f\circ v$
with $v(0)=\rho_{\varepsilon}(u(r_{\gamma}))$ and $\lim_{t\to \infty}v(t)=(\gamma',0)\in N_j\times\{0\}$.
\end{itemize}
As $\varepsilon\to 0$, $v$ breaks into two pieces: 
\begin{itemize}
\item one is a gradient trajectory $u':[0,\infty)\to N_i$
which satisfies $du'/dt=-X_{f_{N_i}}\circ u'$ 
with $u'(0)=u(r_{\gamma})$ and 
$\lim_{t\to \infty}u(t)=\delta\in Cr_{k-1}^-(f_{N_i})$, and
\item the other is an unparameterized  gradient trajectory $v':\mathbb{R}\to M\setminus N$
which satisfies $dv'/dt=-X_f\circ v'$
with $\lim_{t\to-\infty}v'(t)=(\delta,0)\in N_i\times\{0\}$ and
 $\lim_{t\to \infty}v'(t)=(\gamma',0)\in N_j\times\{0\}$.
\end{itemize}
Note that $u$ and $u'$ give an unparameterized gradient trajectory $u'':\mathbb{R}\to N_i$
which satisfies $du''/dt=-X_{f_{N_i}}\circ u''$
with $\lim_{t\to-\infty}u''(t)=\gamma$ and $\lim_{t\to \infty}u''(t)
\\
=\delta$.
Then an intersection point $x\in S_{\gamma'}\cap 
\sigma_{\gamma}(\partial B^k_{r_{\gamma}}\times\{T_{\gamma}\})$
gives $(u'',v')\in \mathcal{N}(\gamma,\delta)\times \mathcal{M}(\delta,\gamma')$.
Conversely, by the gluing analysis, 
$(u'',v')\in \mathcal{N}(\gamma,\delta)\times \mathcal{M}(\delta,\gamma')$ gives 
$x\in S_{\gamma'}\cap \sigma_{\gamma}(\partial B^k_{r_{\gamma}}\times\{T_{\gamma}\})$.
Hence 
\[
\sharp(S_{\gamma'}\cap 
\sigma_{\gamma}(\partial B^k_{r_{\gamma}}\times\{T_{\gamma}\})
=\sharp\mathcal{N}(\gamma,\delta)\sharp\mathcal{M}(\delta,\gamma').
\]
If $\gamma'\in Cr_{k-1}^+(f_{N_i})$, 
an intersection point $x\in S_{\gamma'}\cap 
\sigma_{\gamma}(\partial B^k_{r_{\gamma}}\times\{T_{\gamma}\})$ corresponds to a pair of
\begin{itemize}
\item a gradient trajectory $u:(-\infty, r_{\gamma}]\to N_i$
which satisfies $du/dt=-X_{f_{N_i}}\circ u$ 
with $\lim_{t\to -\infty}u(t)=\gamma$, and
\item a gradient trajectory $v:[0, \infty)\to M\setminus N$
passing through $x$
which satisfies $dv/dt=-X_f\circ v$
with $v(0)=\rho_{\varepsilon}(u(r_{\gamma}))$ and $\lim_{t\to \infty}v(t)=(\gamma',0)\in N_i\times\{0\}$.
\end{itemize}
This time, as $\varepsilon\to 0$, $v$ converges to 
a gradient trajectory $v':[0,\infty)\to N_i$
which satisfies $dv'/dt=-X_{f_{N_i}}\circ v'$ 
with $v'(0)=u(r_{\gamma})$ and 
$\lim_{t\to \infty}v'(t)=\gamma'$
because, if $v$ converged to a pair of the following two maps:
\begin{itemize}
\item a gradient trajectory $u':[0,\infty)\to N_i$
which satisfies $du'/dt=-X_{f_{N_i}}\circ u'$ 
with $u'(0)=u(r_{\gamma})$ and 
$\lim_{t\to \infty}u(t)=\delta\in Cr_{k-1}^-(f_{N_i})$, and
\item an unparameterized  gradient trajectory $v'':\mathbb{R}\to M\setminus N$
which satisfies $dv''/dt=-X_f\circ v''$
with $\lim_{t\to-\infty}v''(t)=(\delta,0)\in N_i\times\{0\}$ and
 $\lim_{t\to \infty}v''(t)=(\gamma',0)\in N_i\times\{0\}$,
\end{itemize}
the existence of such $v''$ contradicts Lemma 2.5.
Note that $u$ and $v'$ give an unparameterized gradient trajectory $u'':\mathbb{R}\to N_i$
which satisfies $du''/dt=-X_{f_{N_i}}\circ u''$
with $\lim_{t\to-\infty}u''(t)=\gamma$ and $\lim_{t\to \infty}u''(t)=\gamma'$.
Then an intersection point $x\in S_{\gamma'}\cap 
\sigma_{\gamma}(\partial B^k_{r_{\gamma}}\times\{T_{\gamma}\})$
gives $u''\in \mathcal{N}(\gamma,\gamma')$;
and conversely,  
$u''\in \mathcal{N}(\gamma,\gamma')$ gives 
$x\in S_{\gamma'}\cap \sigma_{\gamma}(\partial B^k_{r_{\gamma}}\times\{T_{\gamma}\})$.
Hence 
\[
\sharp(S_{\gamma'}\cap 
\sigma_{\gamma}(\partial B^k_{r_{\gamma}}\times\{T_{\gamma}\})
=\sharp\mathcal{N}(\gamma,\gamma').
\]

Therefore, we obtain
\begin{eqnarray*}
\delta_k[\sigma_{\gamma}] &=&
\sum_{p\in Cr_{k-1}(f)}\sum_{\delta\in Cr_{k-1}^-(f_N)}
\sharp\mathcal{N}(\gamma,\delta)\sharp\mathcal{M}(\delta,p)[\sigma_p]
\\
&&
+\sum_{\gamma'\in Cr_{k-1}^+(f_N)}\sum_{\delta\in Cr_{k-1}^-(f_N)}
\sharp\mathcal{N}(\gamma,\delta)\sharp\mathcal{M}(\delta,\gamma')[\sigma_{\gamma'}]
\\
&&
+\sum_{\gamma'\in Cr_{k-1}^+(f_N)}\sharp\mathcal{N}(\gamma,\gamma')[\sigma_{\gamma'}],
\end{eqnarray*}
which implies that  $\partial_k\gamma
=\delta_k[\sigma_{\gamma}]$ under the identification of $p$ and $\gamma$
with $[\sigma_p]$ and $[\sigma_{\gamma}]$, respectively.
\end{proof}

\end{document}